\newtheorem{Theorem}{Theorem}[section]
\newtheorem{Proposition}[Theorem]{Proposition}
\newtheorem{Lemma}[Theorem]{Lemma}
\newtheorem{Corollary}[Theorem]{Corollary}
\newtheorem{Question}[Theorem]{Question}
\theoremstyle{definition}
\newtheorem{Example}[Theorem]{Example}
\DeclareMathOperator{\Tor}{Tor}
\DeclareMathOperator{\sgn}{sgn}
\DeclareMathOperator{\Sym}{Sym} 
\DeclareMathOperator{\GL}{GL}
\newcommand{\kk}{\Bbbk}
\newcommand{\NN}{\mathbb{N}}
\newcommand{\ZZ}{\mathbb{Z}}
\newcommand{\SF}{\mathbb{S}} 
\newcommand{\mm}{\mathfrak{m}}
\newcommand{\Symm}{\mathfrak{S}} 
\newcommand{\ab}{\mathbf{a}} 
\newcommand{\bb}{\mathbf{b}}
\newcommand{\kellsize}{10}
\newlength{\kellsz} \setlength{\kellsz}{\kellsize\unitlength}
\newsavebox{\kell}
\sbox{\kell}{\begin{picture}(\kellsize,\kellsize)
	\put(0,0){\line(1,0){\kellsize}}
	\put(0,0){\line(0,1){\kellsize}}
	\put(\kellsize,0){\line(0,1){\kellsize}}
	\put(0,\kellsize){\line(1,0){\kellsize}}
	\end{picture}}
\newcommand\kellify[1]{\def\thearg{#1}\def\nothing{}%
	\ifx\thearg\nothing
	\vrule width0pt height\kellsz depth0pt\else
	\hbox to 0pt{\usebox{\kell} \hss}\fi%
	\vbox to \kellsz{
		\vss
		\hbox to \kellsz{\hss$#1$\hss}
		\vss}}
\newcommand\ktableau[1]{\vtop{\let\\\cr
		\baselineskip -16000pt \lineskiplimit 16000pt \lineskip 0pt
		\ialign{&\kellify{##}\cr#1\crcr}}}
\newcommand{\sellsize}{22}
\newlength{\sellsz} \setlength{\sellsz}{\sellsize\unitlength}
\newsavebox{\sell}
\sbox{\sell}{\begin{picture}(\sellsize,20)
	\put(0,0){\line(1,0){\sellsize}}
	\put(0,0){\line(0,1){\sellsize}}
	\put(\sellsize,0){\line(0,1){\sellsize}}
	\put(0,\sellsize){\line(1,0){\sellsize}}
	\end{picture}}
\newcommand\sellify[1]{\def\thearg{#1}\def\nothing{}%
	\ifx\thearg\nothing
	\vrule width0pt height\sellsz depth0pt\else
	\hbox to 0pt{\usebox{\sell} \hss}\fi%
	\vbox to \sellsz{
		\vss
		\hbox to \sellsz{\hss$#1$\hss}
		\vss}}
\newcommand\stableau[1]{\vtop{\let\\\cr
		\baselineskip -16000pt \lineskiplimit 16000pt \lineskip 0pt
		\ialign{&\sellify{##}\cr#1\crcr}}}
\begin{document} 

\title[Koszul homology of the second Veronese]{The Koszul homology algebra of the second Veronese is generated by the lowest strand}

\author{Aldo Conca}
\address{Dipartimento di Matematica, Universit\'a Degli Studi di Genova, 16146 Genova, Italy}
\email{conca@dima.unige.it}

\author{Lukas Katth\"an}
\address{School of Mathematics, University of Minnesota, Minneapolis, MN 55455, USA}
\email{katth001@umn.edu}

\author{Victor Reiner}
\address{School of Mathematics, University of Minnesota, Minneapolis, MN 55455, USA}
\email{reiner@math.umn.edu}

\dedicatory{To Craig Huneke on the occasion of his 65th birthday}
\keywords{}

\subjclass[2010]{Primary: 13D02; Secondary: 05E10, 20C15.}

\thanks{The second author was supported by the Deutsche Forschungsgemeinschaft (DFG), grant KA 4128/2-1.}

\begin{abstract}
We show that the Koszul homology algebra of the second Veronese subalgebra of a polynomial ring over a field of characteristic zero is generated, as an algebra,  by the homology classes corresponding to the syzygies of the lowest linear strand. 
\end{abstract}

\maketitle

\section{Introduction} 
Let $\kk$ be a field and $R$ be a standard graded $\kk$-algebra.
Let  $K(R)=\bigoplus K_i(R)$ be the Koszul complex with respect to a minimal system of generators of $R_1$ and let $H(R)=\bigoplus H_i(R)$ be its homology.
Indeed $H(R)$ is bigraded, 
\[
	H(R)=\bigoplus_{(i,j)} H_i(R)_j
\] 
where the first index $0\leq i\leq \dim_\kk R_1$ is the homological degree and the second index $j\in \NN$ is the internal degree coming from the graded structure of $R$.
In particular, $\dim_\kk H_i(R)_j$ is the $(i,j)$-th graded Betti number of $R$ over the symmetric algebra $\Sym_\kk(R_1)$ of $R_1$.
The  differential graded algebra (DGA) structure on $K(R)$ induces a graded-commutative $\kk$-algebra structure on  $H(R)$ that reflects important features of $R$.
For example, the Tate-Assmus theorem \cite[Thm. 2.3.11]{BH} asserts that $R$ is a complete intersection if and only if $H(R)$ is generated, as a $\kk$-algebra, by $H_1(R)$, and the Avramov-Golod theorem \cite[Thm. 3.4.5]{BH} asserts that $R$ is Gorenstein if and only if $H(R)$ is a Poincare duality algebra. 
 
Recall that $R$ is Koszul if $\kk$ has a linear resolution as an $R$-module.   It turns out that also the Koszul property of $R$ imposes strong  restrictions on the algebra structure of $H(R)$. 
It has been proved in \cite[Thm.3.1]{ACI1} and \cite[Thm.5.1]{ACI2} that if $R$ is Koszul then $H_i(R)_j=0$ for $j>2i$ and the multiplication map $\bigwedge^i H_1(R)_2 \to H_i(R)_{2i}$ is surjective. These results indicate that the  Koszul homology   $H(R)$ of a Koszul algebra $R$ tends to be generated by elements of  low degrees and  lead Avramov to ask the following question.  

\begin{Question} 
	Is the Koszul homology algebra $H(R)$ of a Koszul algebra $R$ generated, as a $\kk$-algebra, by the lowest linear strand, i.e. by $\bigoplus_{i} H_i(R)_{i+1}$?
\end{Question}

 In \cite[Thm.3.1]{BDGMS} the authors show that if $R$ is Koszul then the map  $\bigwedge^{i-2} H_1(R)_2 \otimes H_2(R)_3  \to H_i(R)_{2i-1}$ is surjective. On the other hand they show that the answer to   the  question is negative in general, see \cite[Thm. 3.15]{BDGMS}.  It remains open whether the answer is  positive for classical algebras such as Veronese subalgebras  and Segre products of polynomial rings.  The goal of this paper is to prove that the answer is indeed positive for  the second  Veronese subring of a polynomial ring in characteristic $0$.

Our approach will be to prove a somewhat stronger assertion, explained here, via combinatorial representation theory.   
Let  $S=\kk[x_1,\dots, x_n]=\bigoplus_i S_i$. The second Veronese subalgebra $S^{(2)}=\bigoplus_d  S_{2d}$ is a direct summand of $S$.
This implies that  the Koszul homology algebra $H(S^{(2)})$ of $S^{(2)}$ is a direct summand of the Koszul homology $H(\mm^2,S)$ associated to the second power $\mm^2$ of the maximal ideal $\mm=(x_1,\dots, x_n)$ of $S$, see for example \cite[Lemma 4.1]{BCR1} for the precise statements.
In \cite[Lemma 3.4, Remark 3.5]{BCR1} the authors describe a set of cycles  of $K(\mm^c,S)$ and provide some evidence that they generate (as an algebra) the algebra of Koszul cycles or, equivalently, the Koszul homology.
We will show that this is indeed the case for $H(\mm^2,S)$, see Theorem \ref{thm:mingen}. 
Then it will follow easily that  $H(S^{(2)})$ is generated by the elements in the lowest linear strand, see
Corollary~\ref{Veronese-algebra-corollary}.

\section{The set-up and statement of the results}

Let $\kk$ be a field of characteristic zero and let $S = \bigoplus_{d \geq 0} S_d \, \cong \, \kk[x_1,\ldots,x_n]$ be the polynomial ring.

The \emph{second Veronese subalgebra} of $S$ is 
\[
	S^{(2)} := \bigoplus_d  S_{2d}.
\]
Further, let $A := \kk[y_{ij}]$ be the polynomial ring in the entries of a generic symmetric $n \times n$ matrix.
The map $y_{ij} \mapsto x_i x_j$ turns $S$ and $S^{(2)}$ into $A$-algebras.
Thus, one can compute the Koszul homology of $S^{(2)}$ as $\Tor_*^A(S^{(2)}, \kk)$.
It turns out to be more convenient to consider $\Tor_*^A(S, \kk)$ instead. Since the former is a direct summand of the latter, we can still extract information about the Koszul homology of $S^{(2)}$ from this.

An alternative point of view is the following. Let $\mm=(x_1,\ldots,x_n)$ denote the graded maximal ideal in $S$.
Its square $\mm^2$ is generated by the set of all quadratic monomials in $S$.
The Koszul complex 
$$K=K(\mm^2)=K(\mm^2,S)$$  for this generating set
is a DGA-structure on $S \otimes \bigwedge^* S_2$, 
whose homology $H_*(\mm^2)$,
as a bigraded vector space, is isomorphic to $\Tor_*^A(S,\kk)$.

Let $V$ be the vector space generated by the $x_i$. Then $S$ can be considered as the symmetric algebra $\Sym_\kk(V)$ and the general linear group $\GL(V)$ acts on it, preserving the usual grading.
Thus $H_*(\mm^2)$ is also representation of $\GL(V)$, that is, a $\kk[\GL(V)]$-module.
Our first result is to give a set of $\kk[\GL(V)]$-module generators for  $H_*(\mm^2)$.

For $1 \leq i \leq n$, consider the element
\[ 
Z_{i-1} := \sum_{j = 1}^{i} (-1)^{j-1} 
x_j \otimes x_1x_i \wedge x_2x_{i} \wedge \dotsc \wedge \widehat{x_jx_i} \wedge \dotsc \wedge x_{i-1}x_{i}  \wedge x_i^2
\]
Each $Z_i$ is a Koszul cycle in $S_1 \otimes \bigwedge^i S_2$,
representing a (non-zero) homology class in $H_i(\mm^2)_{2i+1}$.

\begin{Example}
	Explicitly, one has
	\begin{align*}
	Z_0 &= x_1 \otimes 1, \\
	Z_1 &= x_1 \otimes x_2^2 - x_2 \otimes x_1x_2, \\
	Z_2 &= x_1 \otimes x_2x_3 \wedge x_3^2 - x_2 \otimes x_1x_3 \wedge x_3^2 + x_3 \otimes x_1x_3 \wedge x_2x_3, \\
	\vdots & \\
	\end{align*}
\end{Example}

\begin{Theorem}
	\label{thm:main1}
	As a $\kk[\GL(V)]$-module, the Koszul homology $H_{*}(\mm^2)$ is generated by all squarefree monomials in $Z_0, \dotsc, Z_{n-1}$.
\end{Theorem}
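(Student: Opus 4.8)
The plan is to work $\GL(V)$-equivariantly and to use that, over a field of characteristic zero, the homology of a complex of polynomial $\GL(V)$-representations may be computed one isotypic component at a time: for each dominant weight $\lambda$, passing to the $\lambda$-highest-weight space (equivalently, to the $\mathbb{S}_\lambda(V)$-multiplicity space) is an exact functor on polynomial representations and hence commutes with homology. Write $N \subseteq H_*(\mm^2)$ for the $\GL(V)$-submodule generated by all squarefree monomials in $Z_0, \dots, Z_{n-1}$. Since $N$ is a $\GL(V)$-submodule, $N = H_*(\mm^2)$ as soon as $N$ contains every highest weight vector of $H_*(\mm^2)$, and by the previous remark this amounts to showing that every Koszul cycle in $K = S \otimes \bigwedge^\bullet S_2$ whose class is a highest weight vector is congruent, modulo boundaries, to a $\kk$-linear combination of $\GL(V)$-translates of squarefree monomials in the $Z_i$.

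The first task is to understand the squarefree monomials. A direct computation in the algebra $K$ shows that $Z_{m_1} \cdots Z_{m_k}$, for $m_1 < \dots < m_k$, is a weight vector of weight $\sum_{t=1}^{k}(1^{m_t},\, m_t + 1)$, and that after permuting the coordinates (so as to sort this weight) its homology class becomes a highest weight vector; denote the resulting dominant partition by $\lambda(T)$, where $T = \{m_1, \dots, m_k\}$. One then checks that the $\lambda(T)$ are pairwise distinct and — this is a genuine point, since products of Koszul cycles are frequently null-homologous — that each monomial $Z_{m_1} \cdots Z_{m_k}$ is nonzero in homology. Consequently $N$ contains a copy of the irreducible $\mathbb{S}_{\lambda(T)}(V)$ in homological degree $\sum_{m \in T} m$ and internal degree $\sum_{m \in T}(2m+1)$, for every $T \subseteq \{0, \dots, n-1\}$.

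The second, and main, task is to pin down all highest weight cycles. Using a straightening (standard monomial) basis for $\bigwedge^\bullet S_2 = \bigwedge^\bullet(\Sym^2 V)$ — equivalently, the standard-tableaux model for the plethysm $\bigwedge^\bullet(\Sym^2 V) = \bigoplus_\mu \mathbb{S}_\mu(V)$ — together with the monomial basis of $S$, one writes down explicitly the highest weight subspace of each $K_i$ in each internal degree as a span of tableau-indexed elements, and transports the Koszul differential to this combinatorial model. The goal is a reduction lemma: every highest weight cycle can be brought, modulo boundaries, into the span of the monomials $Z_{m_1} \cdots Z_{m_k}$ (suitably permuted) of the same bidegree; in particular, any highest weight cycle of a weight not among the $\lambda(T)$ is a boundary. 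Combined with the first task this gives $N = H_*(\mm^2)$; as a consistency check, the resulting list of Schur functors $\bigoplus_T \mathbb{S}_{\lambda(T)}(V)$ should match the decomposition of $H_*(\mm^2) = \Tor_*^A(S, \kk)$ coming from the known $\GL(V)$-equivariant minimal free resolution of the second Veronese $S^{(2)} = A/I_2$ (the symmetric $2 \times 2$ determinantal ideal) together with that of the complementary summand $S_{\mathrm{odd}} = \bigoplus_d S_{2d+1}$ of $S$.

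I expect the reduction lemma to be the crux: controlling the highest weight part of $S \otimes \bigwedge^\bullet(\Sym^2 V)$ under the differential, and straightening an arbitrary highest weight cycle onto the specific monomials $Z_{m_1} \cdots Z_{m_k}$, is where essentially all of the combinatorics lies. The non-vanishing of the $Z$-monomials in homology is a secondary but essential point — it is needed precisely so that the number of generators matches the number of Schur functors in each bidegree — and will likely require an explicit argument, for instance pairing each monomial against a suitable element of the dual Koszul complex, or exhibiting it as part of a minimal generating set of $H_*(\mm^2)$ in its bidegree.
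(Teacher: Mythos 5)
Your overall frame (work $\GL(V)$-equivariantly, exhibit the squarefree $Z$-monomials as highest weight vectors of distinct irreducibles, then show nothing else is needed) points in the right direction, but as written the argument has a genuine gap: the entire weight of the proof rests on the ``reduction lemma'' --- that every highest weight cycle can be straightened, modulo boundaries, onto the $Z$-monomials --- and you do not prove it; you only describe it as a goal and yourself identify it as ``where essentially all of the combinatorics lies.'' Transporting the Koszul differential to a tableau model of $\bigwedge^\bullet(\Sym^2 V)$ and straightening arbitrary highest weight cycles against it is a substantial project that the proposal does not carry out, so what you have is a plan rather than a proof. The nonvanishing of the monomials is likewise only asserted; the paper proves it cleanly by observing that $Z_0Z_1\cdots Z_{n-1}$ has a unique term whose wedge factors are all squarefree, and that this term can neither cancel against other terms nor occur in the boundary of any basis element.

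The missing idea that collapses the hard step is to use the known decomposition of $H_*(\mm^2)\cong \Tor_*^A(S,\kk)$ --- which you relegate to a ``consistency check'' --- as the essential input. By J\'ozefiak--Pragacz--Weyman and Reiner--Roberts, $H_*(\mm^2)$ is multiplicity-free, containing exactly one copy of $\SF^\lambda(V)$ for each self-conjugate $\lambda$, in a prescribed bidegree. Euler's bijection between self-conjugate partitions and their sets of distinct diagonal hooks matches these $\lambda$ exactly with the squarefree monomials $Z_{\mu_1}\cdots Z_{\mu_s}$. Hence no reduction lemma is needed: it suffices to show (a) each squarefree monomial is nonzero in homology, and (b) it lies in the $\SF^\lambda(V)$-isotypic component for the corresponding $\lambda$; multiplicity-freeness and irreducibility then force these monomials to generate everything. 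The paper does (b) by induction on the number of factors, using functoriality of isotypic components under the inclusion $U\hookrightarrow V$ of the span of the variables actually occurring, together with a Littlewood--Richardson computation showing that when $\dim_\kk V=\mu_1+1$ the product $\SF^{(\mu_1+1,1^{\mu_1})}(V)\otimes\SF^{\hat\lambda}(V)$ contains no self-conjugate constituent other than $\lambda$. If instead you keep the decomposition as a mere check, you must prove the reduction lemma from scratch, and that step is absent from your proposal.
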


Next, we construct a minimal generating set for $H_{*}(\mm^2)$ as  an algebra over  $\kk$.
For each choice of an integer $t \geq 0$ and two sequences
$\ab=(a_1,\ldots,a_t,a_{t+1}), \bb=(b_1,\ldots,b_t)$ of linear forms in $S_1$, consider the element
\begin{equation}
\label{BCR-cycle}
z_{\ab,\bb}:=
\sum_{\sigma \in \Symm_{t+1}} 
\sgn(\sigma) \,\, a_{\sigma(t+1)} \otimes (b_1 a_{\sigma(1)} \wedge \cdots \wedge
b_t a_{\sigma(t)}) 
\end{equation}
of $S_1 \otimes \bigwedge^t S_2$.
Here $\Symm_{t+1}$ denotes the symmetric group on $t+1$ letters, permuting the subscripts in $(a_1,\ldots,a_{t+1})$, and $\sgn(\sigma)$ is the $\{ \pm 1\}$ sign of the permutation $\sigma$.

\begin{Theorem}\label{thm:mingen}
 The Koszul homology $H_*(\mm^2)$ is minimally generated as an algebra over $\kk$  by the homology classes of   $z_{\ab,\bb}$ for sequences of variables $\ab = (x_{i_1}, \dotsc, x_{i_{t+1}})$ and $\bb = (x_{j_1}, \dotsc, x_{j_{t}})$ such that
	\begin{enumerate}
		\item[(i)] $i_1 < i_2 < \dotsc < i_{t+1}$,
		\item[(ii)]  $j_1 \leq j_2 \leq \dotsc \leq j_{t}$, and
		\item [(iii)] $i_1 \leq j_1$.
	\end{enumerate}
\end{Theorem}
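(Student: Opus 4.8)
The plan is to derive the theorem from Theorem~\ref{thm:main1} by exploiting the $\GL(V)$-equivariance of the cycles $z_{\ab,\bb}$, a straightening relation, and a degree/dimension count in the lowest linear strand.

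\emph{Generation.} The group $\GL(V)$ acts on $K(\mm^2,S)=S\otimes\bigwedge^\bullet S_2$ by DGA automorphisms respecting the bigrading, and directly from \eqref{BCR-cycle} one has $g\cdot z_{\ab,\bb}=z_{g\ab,g\bb}$ for $g\in\GL(V)$ (where $g\ab=(ga_1,\dots,ga_{t+1})$, and similarly for $\bb$); moreover $z_{\ab,\bb}$ is multilinear in the entries of $\ab$ and of $\bb$, is alternating in $\ab$, and is symmetric in $\bb$. Let $B\subseteq H_*(\mm^2)$ be the $\kk$-subalgebra generated by the classes of all $z_{\ab,\bb}$ whose entries are variables. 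By multilinearity and $\GL(V)$-equivariance, $B$ is a $\kk[\GL(V)]$-submodule of $H_*(\mm^2)$ (it is a subalgebra, and $\GL(V)$ sends generators to $\kk$-combinations of generators). Each $Z_i$ is a nonzero scalar multiple of such a $z_{\ab,\bb}$ — namely $\ab=(x_1,\dots,x_{i+1})$, $\bb=(x_{i+1},\dots,x_{i+1})$ — so $B$ contains every squarefree monomial in $Z_0,\dots,Z_{n-1}$, whence $B=H_*(\mm^2)$ by Theorem~\ref{thm:main1}.

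\emph{Reduction to (i)--(iii).} Since $z_{\ab,\bb}$ is alternating in $\ab$, it vanishes when $\ab$ has a repeated variable; otherwise, sorting $\ab$ and $\bb$ (the latter harmless by symmetry in $\bb$) we may assume (i) and (ii) hold. If also $i_1\le j_1$, the triple is on our list. If instead $j_1<i_1$, I would use the straightening identity
\[
z_{\ab,\bb}=\sum_{k=1}^{t+1}\varepsilon_k\,z_{\ab^{(k)},\bb^{(k)}},
\]
where $\ab^{(k)}$ is $\ab$ with $x_{i_k}$ replaced by $x_{j_1}$ (and $\varepsilon_k=\pm1$ the sign needed to re-sort $\ab^{(k)}$ in increasing order), and $\bb^{(k)}$ is $\bb$ with one occurrence of $x_{j_1}$ replaced by $x_{i_k}$. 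Because $j_1<i_1\le i_\ell$ for every $\ell$, the index $j_1$ is the strictly smallest index occurring in $\ab^{(k)}$, while $\min\bb^{(k)}\ge j_1$; hence after sorting each $z_{\ab^{(k)},\bb^{(k)}}$ satisfies (i)--(iii), so a single application suffices. I would prove this identity by expanding both sides in the monomial basis of $S_1\otimes\bigwedge^{t}S_2$ and matching terms (it is a Laplace/Pl\"ucker-type expansion; the cases $t\le 2$ are a direct computation). Combined with the previous paragraph, this shows that $H_*(\mm^2)$ is generated as a $\kk$-algebra by the classes of the $z_{\ab,\bb}$ satisfying (i)--(iii).

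\emph{Minimality.} A listed generator $z_{\ab,\bb}$ with $|\bb|=t$ has homological degree $t$ and internal degree $2t+1$, so a product of $k$ of them has bidegree $(m,2m+k)$, where $m$ is the sum of the $|\bb|$'s. Thus the only monomials in the listed generators landing in a bidegree of the form $(m,2m+1)$ are single listed generators of that bidegree; since the listed generators generate $H_*(\mm^2)$, this forces $H_m(\mm^2)_{2m+1}$ to be spanned by the listed generators of bidegree $(m,2m+1)$, and it forces $H_+(\mm^2)^2$ to meet $H_m(\mm^2)_{2m+1}$ in $0$ (as $H_+(\mm^2)^2$ is spanned by monomials of length $\ge 2$, lying in bidegrees $(m,2m+k)$ with $k\ge 2$). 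Hence in each bidegree $(m,2m+1)$ the space of indecomposables $H_+(\mm^2)/H_+(\mm^2)^2$ equals $H_m(\mm^2)_{2m+1}$, while in bidegree $(0,1)$ it is $(S/\mm^2)_1$ with basis $x_1,\dots,x_n$. So it only remains to show that, for each $m$, the listed generators of bidegree $(m,2m+1)$ are $\kk$-linearly independent in $H_m(\mm^2)_{2m+1}$. In internal degree $2m+1$ and homological degree $m$ there are no boundaries, since $K_{m+1}$ vanishes in internal degree $2m+1$; thus $H_m(\mm^2)_{2m+1}$ is literally the space of cycles inside $S_1\otimes\bigwedge^m S_2$, and it is spanned by the listed generators, which are indexed by the semistandard Young tableaux of hook shape $(m+1,1^m)$ on $\{1,\dots,n\}$ (first column $i_1<\dots<i_{m+1}$, first row $i_1\le j_1\le\dots\le j_m$). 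The conclusion follows once one identifies $H_m(\mm^2)_{2m+1}$, as a $\GL(V)$-module, with the Schur module $\SF_{(m+1,1^m)}(V)$: it is cyclic on the class of $Z_m$, whose weight has dominant rearrangement $(m+1,1^m)$, and an upper bound on its dimension by $\dim_\kk\SF_{(m+1,1^m)}(V)$ then forces equality, so that the above spanning set, having exactly that many elements, is a basis.

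\emph{Main obstacle.} The two delicate points are the general form of the straightening identity (proving it genuinely reduces an arbitrary $z_{\ab,\bb}$ to listed ones in one step) and the identification of the lowest-strand homology $H_m(\mm^2)_{2m+1}$ with the hook Schur module — equivalently, the dimension count matching the semistandard tableaux. The latter is where the combinatorial representation theory of $\GL(V)$ (and the $\GL(V)$-structure underlying Theorem~\ref{thm:main1}) is essential, and I would regard it as the crux of the argument.
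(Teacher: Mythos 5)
Your proposal is correct and follows essentially the same route as the paper: reduce to spanning via $\GL(V)$-equivariance and Theorem~\ref{thm:main1}, straighten the case $j_1<i_1$ by a Pl\"ucker-type relation, and get minimality from the bidegree bookkeeping together with the count of semistandard hook tableaux. Your straightening identity is exactly the paper's Garnir-type relation (Lemma~\ref{lem:garnir}) applied to $\bar{\ab}=(x_{j_1},x_{i_1},\dots,x_{i_{t+1}})$ and solved for $z_{\ab,\bb}$ (the paper proves it by a sign-reversing involution, which is the ``expand and match terms'' argument you sketch), and the identification $H_m(\mm^2)_{2m+1}\cong \SF^{(m+1,1^m)}(V)$ that you rightly flag as the crux is supplied directly by Theorem~\ref{thm:second-Veronese-Koszul-description}, since the hook is the only self-conjugate $\lambda$ in that bidegree --- note that your ``cyclic module generated by a weight vector plus an upper bound'' route would not by itself force irreducibility.
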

\noindent
Equivalently,  the  above conditions on $\ab, \bb$ assert that the following is a semistandard (column-strict) tableau:
$$
\stableau{ 
{i_1} & {j_1} & {j_2} & {\cdots} & {i_t}\\
{i_2} \\
{\vdots} \\
{i_t} \\
{i_{t+1}}
}
$$

The cycles $z_{\ab,\bb}$ were introduced by Bruns, Conca and R\"omer \cite[Lemma 3.4, Remark 3.5]{BCR1} in a more general context. There, it was suggested that they give generating sets for the Koszul homology of all Veronese subalgebras of the polynomial ring.
In \cite[Prop. 5.5]{BCR2}, they provide some evidence that these
elements generate, by showing that a subset of them generate
the cycles of $K(\mm^c)$ as an algebra up through homological dimension $2$ for each $c$.

\begin{Corollary}
\label{Veronese-algebra-corollary}
The Koszul homology of the second Veronese algebra is generated by  its lowest strand.
\end{Corollary}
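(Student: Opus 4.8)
The plan is to deduce the statement about $H(S^{(2)})$ from Theorem~\ref{thm:mingen} using the fact that $H(S^{(2)})$ is a direct summand of $H_*(\mm^2)$, together with a degree bookkeeping argument. First I would recall precisely, via \cite[Lemma 4.1]{BCR1}, that since $S^{(2)}$ is a direct summand of $S$ as an $S^{(2)}$-module (equivalently, $A$ acts through the diagonal subalgebra), the Koszul homology algebra $H(S^{(2)})$ sits inside $H_*(\mm^2)$ as a retract of bigraded $\kk$-algebras: concretely, $H_i(S^{(2)})_j$ is the summand of $H_i(\mm^2)_j$ on which the relevant grading/character is even, and the multiplication on $H(S^{(2)})$ is the restriction of that on $H_*(\mm^2)$. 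Hence any algebra generating set for $H_*(\mm^2)$ yields, after projecting onto the summand, an algebra generating set for $H(S^{(2)})$.

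Next I would track what the generators $z_{\ab,\bb}$ of Theorem~\ref{thm:mingen} contribute under this projection. The cycle $z_{\ab,\bb}$ with $t$ the homological parameter lives in $S_1\otimes\bigwedge^t S_2$, so it represents a class in $H_t(\mm^2)_{2t+1}$ — that is, it lies in the lowest linear strand $\bigoplus_i H_i(\mm^2)_{i+1}$ (here $j = 2t+1 = i+1$ when reindexed appropriately — more carefully, with the conventions of the paper the internal degree of $z_{\ab,\bb}$ is $2t+1$ as an element of $H_t$, and this is exactly the bottom of the allowed range, matching the lowest strand). The key point is that \emph{all} the algebra generators produced by Theorem~\ref{thm:mingen} lie in this single strand. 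Therefore their images under the projection $H_*(\mm^2)\twoheadrightarrow H(S^{(2)})$ lie in the lowest linear strand of $H(S^{(2)})$, and since these images generate $H(S^{(2)})$ as a $\kk$-algebra, we are done.

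The one subtle point I would want to pin down is the correspondence between the bigrading on $H_*(\mm^2)$ coming from $K(\mm^2,S)$ and the bigrading on $H(S^{(2)})$ coming from its own Koszul complex $K(S^{(2)})$; these differ by a rescaling of the internal degree (a monomial of $S$-degree $2d$ has $S^{(2)}$-degree $d$, and a wedge of $t$ elements of $S_2$ has internal degree $2t$ in $K(\mm^2)$ but $t$ in $K(S^{(2)})$). I would check that under this identification the lowest linear strand of $H_*(\mm^2)$ maps onto (a subspace of) the lowest linear strand of $H(S^{(2)})$, so that "generated by classes in homological degree $i$ and internal degree $i+1$" is preserved. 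This is a routine but necessary translation; I do not expect it to present any real obstacle, since it is just matching up two gradings that differ by an explicit affine reparametrization. With that in hand the corollary is immediate from Theorem~\ref{thm:mingen}.
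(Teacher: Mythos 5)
There is a genuine gap in your argument: the classes $z_{\ab,\bb}$ do not lie in $H(S^{(2)})$ at all, so ``projecting the generators onto the summand'' produces nothing. The decomposition $H_*(\mm^2)\cong H(S^{(2)})\oplus(\text{odd part})$ is by the parity of the internal $S$-degree, and $z_{\ab,\bb}$ lives in $H_t(\mm^2)_{2t+1}$, which is \emph{odd}. Consequently the vector-space projection $\pi$ onto the even summand kills every generator from Theorem~\ref{thm:mingen}. Moreover $\pi$ is not an algebra homomorphism --- the product of two odd-internal-degree classes is even, so $\pi(ab)\neq\pi(a)\pi(b)$ in general --- and therefore your assertion that the images of an algebra generating set of $H_*(\mm^2)$ form an algebra generating set of the summand is false as stated. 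The ``routine translation of gradings'' you defer to the end would in fact expose the problem rather than resolve it: the bidegrees $(t,2t+1)$ occupied by the $z_{\ab,\bb}$ correspond to no bidegree of $H(S^{(2)})$, since $2t+1$ is never twice an integer.

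The paper supplies the missing step. Since $\Tor_*^A(S^{(2)},\kk)$ is exactly the even-internal-degree part of $\Tor_*^A(S,\kk)$, and by Theorem~\ref{thm:mingen} every element of the latter is a polynomial in the $z_{\ab,\bb}$ (each of odd internal degree), every element of the even part is a sum of products of an \emph{even} number of the $z_{\ab,\bb}$; hence $H(S^{(2)})$ is generated as a $\kk$-algebra by the quadratic monomials $z_{\ab,\bb}\cdot z_{\ab',\bb'}$. These lie in homological degree $t+t'$ and $S$-degree $2(t+t')+2$, i.e.\ $S^{(2)}$-degree $(t+t')+1$, which is precisely the lowest strand. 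So your degree bookkeeping is the right idea, but it must be applied to the quadratic monomials in the generators, not to the generators themselves.
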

\begin{proof}
	The $A$-module $S$ splits into two direct summands, $M_1, M_2$, where $M_1 \cong S^{(2)}$ contains all elements of even degree and $M_2$ contains all elements of odd degree. 
	Since all maps in a minimal free resolution of $S$ over $A$ are homogeneous, this carries over to $\Tor_*^A(S, \kk)$. 
	Thus $\Tor_*^A(S^{(2)}, \kk)$ is itself the second Veronese subalgebra of $\Tor_*^A(S, \kk)$ (with respect to the internal degrees).
	
	Together with Theorem \ref{thm:mingen}, this implies that every element of $\Tor_*^A(S^{(2)}, \kk)$ can be written as a sum of products of an even number of generators of the form $z_{\ab,\bb}$. So $\Tor_*^A(S^{(2)}, \kk)$ is generated by the quadratic monomials in the $z_{\ab,\bb}$.
	But those lie in the lowest strand of it, and thus the claim follows.
\end{proof}

Finally, we present an example showing how our results can fail when $\kk$ has positive 
characteristic.

\begin{Example}
	In characteristic $2$, the $z_{\ab,\bb}$ do not generate $H_*(\mm^2)$.
	Consider the element 
	\begin{align*}
		z := x_1 \otimes x_2x_3 \wedge x_4x_5 + x_3 \otimes x_4x_5 \wedge x_1x_2 + x_5 \otimes x_1x_2 \wedge x_3x_4 + x_2 \otimes x_3x_4 \wedge x_1x_5 + x_4 \otimes x_1x_5 \wedge x_2x_3  
	\end{align*}
	of $\Sym^1(V) \otimes \bigwedge^2 \Sym^2(V)$.
	The sum of the coefficients of the terms of $z$ is $1 \bmod{2}$. On the other hand, every $z_{\ab,\bb}$ for $t = 1$ has six terms and thus coefficient sum $0$. Hence $z$ is not a linear combination of $z_{\ab,\bb}$ when $\kk$ has characteristic $2$.
	
	A combinatorial interpretation of this example is as follows.  It is well-known (see, e.g., \cite[Prop. 3.2]{ReinerRoberts}) that, within  $H_i(\mm^2)$ (or $\Tor^A_i(S,\kk)$), 
	the homogeneous component with
	square-free multidegree $x_1 x_2 \cdots x_n$ is
	isomorphic as a $\kk$-vector space to the (reduced) simplicial homology $\tilde{H}_{i-1}(\Delta_n; \kk)$,
	where $\Delta_n$ is the {\it matching complex} on $n$ vertices.  That is, $\Delta_n$ is the simplicial complex whose vertex set is indexed by all pairs $\{i,j\}$ with $1 \leq i < j \leq n$, and whose simplices are collections $\{ \{i_m,j_m\}\}_{m=1,2,\ldots,t}$ which are pairwise disjoint, that is, they form the edges of a partial matching inside the complete graph $K_n$.  It turns out that $\Delta_5$ is the Petersen graph (see \cite[Fig. 4(b)]{ReinerRoberts}, and that under the above isomorphism, the elements $z_{\ab,\bb}$ correspond to cycles of length $6$ in the Petersen graph, while $z$ corresponds to a cycle of length $5$.
	By the parity argument above, the $5$-cycle cannot be obtained as a linear combination of the $6$-cycles.
\end{Example}

The following questions remain open.

\begin{Question}
 In positive odd characteristic, do the cycles  $z_{\ab,\bb}$   span the lowest strand of the Koszul homology $H_*(\mm^2)$?
\end{Question}

\begin{Question}
 In positive characteristic (or over integers), are the Koszul homologies 
$H_*(\mm^2)$ or $H_*(S^{(2)})$  generated as algebras over
$\kk$ by their lowest strands? 
\end{Question}

\section{Generating the homology as a \texorpdfstring{$\kk[\GL(V)]$}{k[GL(V)]}-module}
\label{sec:generation-for-c=2}

The Koszul homology $H_*(\mm^2) \cong \Tor_*^A(S,\kk)$ is not only bigraded as a $\kk$-vector space,
but also a bigraded representation for $\GL(V) \cong \GL_n(\kk)$.
There is a very simple, explicit description of the
multiplicities of its $\GL(V)$-decomposition given by 
results of J\'ozefiak-Pragacz-Weyman \cite{JPW}
and Reiner-Roberts \cite{ReinerRoberts}.

When considered as a function from $\kk$-vector spaces $V$
to (bigraded) $\GL(V)$-representations, the map $V \longmapsto H_*(\mm^2)$
is also a {\it polynomial functor}, in the sense of Macdonald \cite[Chap. I, App. A]{Macdonald}.
Therefore each graded component decomposes into irreducible polynomial
$\GL(V)$-representations, which are the {\it Schur functors} $\SF^\lambda(V)$
indexed by partitions 
$
\lambda=(\lambda_1 \geq \cdots \geq \lambda_n)
$
whose length (=number of nonzero parts $\lambda_i$) is at most $n=\dim_\kk V$,
that is, $\lambda_n \geq 0$; the weight $|\lambda|:=\sum_i \lambda_i$ gives
the homogeneity of $\SF^\lambda(V)$ as a polynomial representation. 

It turns out that the only $\GL(V)$-irreducibles $\SF^\lambda(V)$
which appear in $H_*(\mm^2)$  correspond to {\it self-conjugate}
partitions $\lambda$, that is, those whose Ferrers diagram 
is symmetric under reflection across the diagonal.  Each such $\lambda$
can be expressed 
$
\lambda=
\left( \begin{smallmatrix} 
\mu \\ 
\mu
\end{smallmatrix} \right)
=
\left( \begin{smallmatrix} 
\mu_1 & \cdots & \mu_s \\ 
\mu_1 & \cdots & \mu_s
\end{smallmatrix} \right)
$
in the {\it Frobenius notation} for partitions \cite[\S I.1, page 3]{Macdonald}, where
$s:=\max\{i: \lambda_i \geq i\}$ is the {\it Durfee square size}
of $\lambda$, and $\mu_i=\lambda_i-i$ for $i=1,2,\ldots,s$.
The following statement combines  \cite[Thm. 3.19 at $r=1$]{JPW}
and \cite[Thm. 1.2]{ReinerRoberts}.

\begin{Theorem}
\label{thm:second-Veronese-Koszul-description}
As a bigraded $\GL(V)$-representation 
$$
H_*(\mm^2) \cong \Tor_*^A(S,\kk) \cong \bigoplus_{\text{self-conjugate }\lambda} \SF^\lambda(V)
$$
where the direct sum runs over all self-conjugate partitions
$\lambda=\left( \begin{smallmatrix} \mu \\ \mu \end{smallmatrix} \right)$ with $\ell(\lambda) \leq n$,
and with the summand $\SF^\lambda(V)$ occurring uniquely in the bidegree
$
H_{|\mu|}(\mm^2)_{|\lambda|} \cong \Tor_{|\mu|}^A(S,\kk)_{|\lambda|}.
$
\end{Theorem}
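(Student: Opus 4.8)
The plan is to assemble the statement from the two cited computations, so the content is really a matter of unwinding conventions rather than a new argument. The first isomorphism $H_*(\mm^2) \cong \Tor^A_*(S,\kk)$ has already been recorded above ($K(\mm^2)$ is by construction the complex computing $\Tor^A(S,\kk)$ on the algebra generators $x_ix_j$), and it is $\GL(V)$-equivariant by functoriality of $\Tor$, so it remains to establish the second isomorphism. For that, observe that $A = \kk[y_{ij}]$, the $A$-module $S$, and any minimal free $A$-resolution of $S$ all carry compatible $\GL(V) = \GL_n(\kk)$-actions; hence each $\Tor^A_i(S,\kk)$ is a polynomial $\GL(V)$-representation whose homogeneity as a polynomial functor equals its internal ($S$-)degree, and in characteristic zero it is semisimple, decomposing with finite multiplicities into Schur functors $\SF^\lambda(V)$ that can be read off the terms of any $\GL$-equivariant minimal resolution.

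Next I would invoke \cite[Thm. 3.19 at $r=1$]{JPW}, which provides exactly such an equivariant minimal free resolution over $A$ of the coordinate ring of rank-$\le 1$ symmetric $n\times n$ matrices, i.e.\ of the second Veronese $S^{(2)} \cong A/I_2$; combined with the splitting $S = S^{(2)} \oplus (\text{odd part})$ of $S$ as an $A$-module used in the proof of Corollary~\ref{Veronese-algebra-corollary}, this identifies the $\GL(V)$-irreducibles occurring in $\Tor^A_*(S,\kk)$ as precisely the $\SF^\lambda(V)$ with $\lambda$ self-conjugate and $\ell(\lambda) \le n$, each with multiplicity one, and locates $\SF^\lambda(V)$ in homological degree $|\mu|$ when $\lambda = \left(\begin{smallmatrix}\mu\\\mu\end{smallmatrix}\right)$ in Frobenius notation. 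The internal degree is then forced: since the internal grading on $\Tor^A_*(S,\kk)$ is the $S$-grading, which for a polynomial $\GL(V)$-representation coincides with its homogeneity as a polynomial functor, the copy of $\SF^\lambda(V)$ --- homogeneous of degree $|\lambda|$ --- can only sit in $H_{|\mu|}(\mm^2)_{|\lambda|}$. I would then cross-check this bidegree against \cite[Thm. 1.2]{ReinerRoberts}, which reaches the same list of self-conjugate $\SF^\lambda(V)$ by an independent (matching-complex) route and records the bigrading explicitly; this is what pins down the normalization and rules out an off-by-$s$ shift in the homological index, where $s$ is the Durfee square size of $\lambda$.

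I expect the main obstacle to be purely organizational: keeping straight the gradings in play --- the degree assigned to the matrix entries $y_{ij}$, the $S$-degree (which differs from it by a factor of $2$), and the Koszul homological degree --- and reconciling the partition/Frobenius bookkeeping of \cite{JPW} with that of \cite{ReinerRoberts}, so that the bidegree in the final statement emerges as written. A second point worth flagging is that characteristic zero is genuinely used: it is what makes the $\GL(V)$-representations semisimple and, equally, what makes the JPW complex minimal, so that its terms are literally the Tor groups; in small positive characteristic both the minimality and the multiplicity-one decomposition can fail, consistent with the characteristic-$2$ pathology in the Example above.
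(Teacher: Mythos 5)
Your proposal is correct and follows essentially the same route as the paper, which gives no independent proof of this theorem but simply records that it combines \cite[Thm. 3.19 at $r=1$]{JPW} with \cite[Thm. 1.2]{ReinerRoberts}; your write-up is exactly such an assembly, with the same two citations and the same bookkeeping of $\GL(V)$-equivariance, semisimplicity in characteristic zero, and the internal degree being forced by the homogeneity of $\SF^\lambda(V)$ as a polynomial functor. The one caveat is that the JPW resolution of $S^{(2)}\cong A/I_2$ by itself accounts only for the even-internal-degree (even $|\lambda|$) summands of $\Tor_*^A(S,\kk)$, so \cite[Thm. 1.2]{ReinerRoberts} is genuinely needed to supply the odd summand $M_2$ of $S$ rather than serving merely as a cross-check on the bigrading --- but since you do invoke it, nothing essential is missing.
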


\begin{Example}
The self-conjugate 
$
\lambda=(7,7,6,3,3,3,2)
=\left( \begin{smallmatrix} 
6 &5 &3  \\ 
6 &5 &3
\end{smallmatrix} \right)
$
has Durfee square of size $s=3$:
$$
\lambda 
= \ktableau{ 
{\times} & {} & {} & {} & {} & {} & {} \\
{} & {\times} & {} & {} & {} & {} & {} \\
{} & {} & {\times} & {} & {} & {} \\
{} & {} & {} \\
{} & {} & {} \\
{} & {} & {} \\
{} & {} }
$$
Here $\mu=(6,5,3)$ gives the sizes of the rows strictly above the diagonal
cells, marked here with an ``$\times$''.
Since $|\mu|=14$, and $|\lambda|=31$,  the unique copy of $\SF^\lambda(V)$ appears in
bidegree $H_{14}(\mm^2)_{31}$, assuming $n=\dim_\kk V \geq 7$.
\end{Example}

Our method for showing that  the elements $z_{\ab,\bb}$ generate $H_*(\mm^2)$ is
motivated by a proof of Euler's result asserting that
the number of self-conjugate partitions of $N$ is the same as the 
number of partitions of $N$ that use only 
only odd, distinct part sizes:  there is a bijection sending
the self-conjugate partition
$\lambda=\left( \begin{smallmatrix} 
\mu\\ 
\mu
\end{smallmatrix} \right)$ to the partition consisting of the (odd) sizes $(2\mu_1+1,\ldots,2\mu_s+1)$
of its $s$ (distinct) self-conjugate hooks on the main diagonal 
$$
\left\{ 
\left( \begin{smallmatrix} 
\mu_1 \\ 
\mu_1
\end{smallmatrix} \right)
,\ldots, 
\left( \begin{smallmatrix} 
\mu_s \\ 
\mu_s
\end{smallmatrix} \right)
\right\}
= 
\left\{ 
(\mu_1+1,1^{\mu_1})
,\ldots, 
(\mu_s+1,1^{\mu_s})
\right\}.
$$

\begin{Example}
\label{ex:Euler-bijection}
The self-conjugate $\lambda$ above
is sent to $(11,9,5)$ having distinct odd parts:
$$
\ktableau{ 
{\times} & {} & {} & {} & {} & {} & {} \\
{} & {\times} & {} & {} & {} & {} & {} \\
{} & {} & {\times} & {} & {} & {} \\
{} & {} & {} \\
{} & {} & {} \\
{} & {} & {} \\
{} & {} }
=
\ktableau{ 
{\times} & {} & {} & {} & {} & {} & {} \\
{} & \\
{} & \\
{} & \\
{} & \\
{} & \\
{} & }
\quad \sqcup \quad
\ktableau{ 
{\times} & {} & {} & {} & {} & {} \\
{} & \\
{} & \\
{} & \\
{} & \\
{} & }
\quad \sqcup \quad
\ktableau{ 
{\times} & {} & {} & {}  \\
{} & \\
{} & \\
{} & }
$$
\end{Example}

\noindent
The role of the hooks will be played by the elements $Z_i$ defined above.
The goal of this section is the following more precise version of Theorem \ref{thm:main1}:
\begin{Theorem}
\label{thm:main}
For each self-conjugate 
$\lambda=\left( \begin{smallmatrix} \mu\\ \mu\end{smallmatrix} \right)
=\left( \begin{smallmatrix} \mu_1 & \cdots & \mu_s \\ \mu_1 &\cdots & \mu_s \end{smallmatrix} \right)$ 
with $\ell(\lambda) \leq n$,
the monomial $Z_{\mu_1} \cdots Z_{\mu_s}$ generates the 
$\SF^\lambda(V)$-isotypic component of $H_{|\mu|}(\mm^2)_{|\lambda|}$
as an (irreducible) $\kk[\GL(V)]$-module.
\end{Theorem}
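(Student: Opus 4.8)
The plan is to build the proof on the explicit decomposition in Theorem~\ref{thm:second-Veronese-Koszul-description} together with one elementary remark about the dominance order on partitions: two \emph{distinct} self-conjugate partitions of the same size are incomparable in the dominance order. Indeed, conjugation reverses dominance and fixes each self-conjugate partition, so if $\lambda$ precedes $\lambda'$ in the dominance order then $\lambda'=(\lambda')^{T}$ precedes $\lambda^{T}=\lambda$, forcing $\lambda=\lambda'$. Hence, in any fixed bidegree, the self-conjugate $\lambda'$ with $\SF^{\lambda'}(V)$ occurring in $H_*(\mm^2)$ form an antichain.

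First I would localize the isotypic component by a weight computation. Each $Z_m$ is a $\GL(V)$-weight vector, and inspection of its definition shows its weight is $(1^{m},m+1)$, i.e.\ the reversal of the hook $\binom{m}{m}=(m+1,1^{m})$. Therefore $Z_{\mu_1}\cdots Z_{\mu_s}$ is a weight vector of weight $w:=\sum_{k}(1^{\mu_k},\mu_k+1)$, and a short combinatorial check --- which is precisely the content of Euler's bijection at the level of diagrams, the reversed hooks being reassembled and sorted into the self-conjugate shape obtained by nesting them --- shows that $\operatorname{sort}(w)=\binom{\mu}{\mu}=\lambda$. Now a vector of weight $w$ can have nonzero component in $\SF^{\lambda'}(V)$ only if $\operatorname{sort}(w)=\lambda$ precedes $\lambda'$ in the dominance order; by the antichain property the only such $\lambda'$ occurring in $H_{|\mu|}(\mm^2)_{|\lambda|}$ is $\lambda$ itself, which by Theorem~\ref{thm:second-Veronese-Koszul-description} occurs there with multiplicity one. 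Hence $Z_{\mu_1}\cdots Z_{\mu_s}$ lies in the unique copy of the irreducible $\SF^{\lambda}(V)$, and generates it as a $\kk[\GL(V)]$-module as soon as it is nonzero.

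So the substantive step is to show $Z_{\mu_1}\cdots Z_{\mu_s}\ne 0$ in homology, and here is how I would do it. Expanding the product inside $K_*(\mm^2)=S\otimes\bigwedge^*S_2$ yields a sum over tuples $(j_1,\dots,j_s)$ with $1\le j_\ell\le\mu_\ell+1$ of terms $\pm\,x_{j_1}\cdots x_{j_s}\otimes\Omega(j_1,\dots,j_s)$. Because $\mu_1>\dots>\mu_s$, the largest variable index appearing in a given quadratic factor of $\Omega$ records from which $Z_{\mu_\ell}$ that factor came, so the whole tuple can be read back off $\Omega$; consequently the $\prod_\ell(\mu_\ell+1)$ terms are pairwise distinct basis vectors and no cancellation occurs. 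Next, consider the single term with $(j_1,\dots,j_s)=(\mu_1+1,\dots,\mu_s+1)$: its monomial part is the squarefree $x_{\mu_1+1}\cdots x_{\mu_s+1}$, and one verifies that every quadratic dividing it already occurs among the quadratics wedged in $\Omega$. It follows that this basis vector can never appear in the Koszul differential of any chain in $K_{|\mu|+1}(\mm^2)_{|\lambda|}$ (producing it would require splitting off a quadratic divisor of $x_{\mu_1+1}\cdots x_{\mu_s+1}$ not already present in the wedge). Hence its coefficient is $\pm1$ in $Z_{\mu_1}\cdots Z_{\mu_s}$ but $0$ in every boundary, so $[Z_{\mu_1}\cdots Z_{\mu_s}]\ne0$, completing the proof.

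I expect this last step --- showing the cycle is not a boundary --- to be the real work; once Theorem~\ref{thm:second-Veronese-Koszul-description} and the antichain remark are in hand, the identification of the isotypic component is routine. Within it, the two points requiring care are the no-cancellation claim (which genuinely uses the strict inequalities $\mu_1>\dots>\mu_s$, and would fail for repeated parts) and the check that the distinguished monomial cannot be hit by a boundary. It is worth noting that this nonvanishing argument is insensitive to the characteristic; the hypothesis $\operatorname{char}\kk=0$ enters only through Theorem~\ref{thm:second-Veronese-Koszul-description}, hence only in the step where the isotypic component is pinned down.
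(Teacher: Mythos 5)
Your proposal is correct, and it splits the same way the paper's proof does --- a nonvanishing step plus an isotypic-identification step --- but the second step is handled by a genuinely different argument. For nonvanishing, your argument is essentially the paper's (Lemma~\ref{lem:products-are-nonzero}): exhibit a distinguished squarefree term whose wedge part already contains every quadratic divisor of its monomial part, so it can neither cancel nor appear in any boundary. The paper first reduces to the full product $Z_0Z_1\cdots Z_{n-1}$ and uses the term with monomial $x_1\cdots x_n$, whereas you work directly with $Z_{\mu_1}\cdots Z_{\mu_s}$ and the term indexed by $(\mu_1+1,\ldots,\mu_s+1)$; both are valid and of the same difficulty. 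For the identification of the isotypic component (the paper's Lemma~\ref{lem:products-are-isotypic}), the paper inducts on $s$, restricting to the subspace $U$ spanned by $x_1,\ldots,x_{\mu_2+1}$, invoking functoriality of isotypic components for polynomial functors (Proposition~\ref{prop:functoriality-of-isotypic-components}), and then running a Littlewood--Richardson computation (Lemma~\ref{lem:LR}) which crucially exploits $\dim_\kk V=\mu_1+1$ to kill all self-conjugate constituents other than $\lambda$. You instead observe that $Z_{\mu_1}\cdots Z_{\mu_s}$ is a $\GL(V)$-weight vector whose sorted weight is exactly $\lambda$ (a correct computation --- it is the Euler hook decomposition, and is easily proved by induction on $s$ since adjoining the outer hook adds $1$ to each of the first $\mu_1$ entries and appends $\mu_1+1$), that a weight-$w$ vector meets $\SF^{\lambda'}(V)$ only when $\lambda'$ dominates $\operatorname{sort}(w)$, and that distinct self-conjugate partitions of equal size are dominance-incomparable because conjugation reverses dominance and fixes them. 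Combined with Theorem~\ref{thm:second-Veronese-Koszul-description}, which guarantees that only self-conjugate shapes occur, this pins down the component at once. Your route is shorter and avoids both the Littlewood--Richardson rule and the change-of-vector-space/functoriality apparatus; the paper's route is more structural and illustrates a technique (restricting the functor to a smaller $V$ to force shapes) that is reusable in settings where a clean weight argument is unavailable. Both proofs rest on the multiplicity-one statement of Theorem~\ref{thm:second-Veronese-Koszul-description} in exactly the same way, and your closing remark about where characteristic zero enters is accurate.
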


\noindent
Since irreducibility shows that any nonzero vector in $\SF^\lambda(V)$ generates it as a
$\kk[\GL(V)]$-representation, and since Theorem~\ref{thm:second-Veronese-Koszul-description}
shows $\SF^\lambda(V)$ occurs in $H_*(\mm^2)$ with multiplicity one, these two lemmas prove the theorem:

\begin{Lemma}
\label{lem:products-are-nonzero}
	All squarefree products of $\{Z_0,Z_1,\ldots,Z_{n-1}\}$ are nonzero in the Koszul homology $H_*(\mm^2)$.
\end{Lemma}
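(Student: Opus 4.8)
The plan is to prove directly that no squarefree product of the $Z_i$ is a boundary in the Koszul complex $K=K(\mm^2)=S\otimes\bigwedge^{\bullet}S_2$. Since reordering factors only changes the sign, it suffices to fix $\mu_1>\mu_2>\dots>\mu_s\ge 0$ and show that $P:=Z_{\mu_1}\cdots Z_{\mu_s}\ne 0$ in $H_*(\mm^2)$. Because $Z_{\mu_\ell}\in S_1\otimes\bigwedge^{\mu_\ell}S_2$, the cycle $P$ lies in $S_s\otimes\bigwedge^{|\mu|}S_2$, in the bidegree $H_{|\mu|}(\mm^2)_{s+2|\mu|}$, and boundaries into that bidegree can come only from $\partial\colon S_{s-2}\otimes\bigwedge^{|\mu|+1}S_2\to S_s\otimes\bigwedge^{|\mu|}S_2$; in particular there are none when $s\le 1$, which already handles the individual $Z_i$. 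Recall that on the monomial basis $\{x^{\alpha}\otimes q_1\wedge\dots\wedge q_p\}$, indexed by a monomial $x^\alpha$ and a strictly increasing tuple of quadratic monomials $q_1<\dots<q_p$, the differential is $\partial(x^{\alpha}\otimes q_1\wedge\dots\wedge q_p)=\sum_{i}(-1)^{i-1}\,x^{\alpha}q_i\otimes(q_1\wedge\dots\wedge\widehat{q_i}\wedge\dots\wedge q_p)$.

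In each factor $Z_{\mu_\ell}=\sum_{j=1}^{\mu_\ell+1}(-1)^{j-1}\,x_j\otimes\bigwedge_{k\ne j,\,1\le k\le\mu_\ell+1}x_kx_{\mu_\ell+1}$, select the summand with $j=\mu_\ell+1$; multiplying these selections yields the term
\[
T^*\ =\ \pm\,x_{\mu_1+1}x_{\mu_2+1}\cdots x_{\mu_s+1}\ \otimes\ \bigwedge_{\ell=1}^{s}\bigl(x_1x_{\mu_\ell+1}\wedge x_2x_{\mu_\ell+1}\wedge\dots\wedge x_{\mu_\ell}x_{\mu_\ell+1}\bigr).
\]
Its $|\mu|$ exterior factors $x_kx_{\mu_\ell+1}$ (with $1\le\ell\le s$, $1\le k\le\mu_\ell$) are pairwise distinct — for a fixed $\ell$ the $k$ vary, and across distinct $\ell,\ell'$ an equality $x_kx_{\mu_\ell+1}=x_{k'}x_{\mu_{\ell'}+1}$ would force $\mu_{\ell'}+1\le\mu_\ell$ and $\mu_\ell+1\le\mu_{\ell'}$ at once — so $T^*$ is $\pm$ a basis vector. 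Moreover $T^*$ occurs in the monomial expansion of $P$ with coefficient exactly $\pm1$: it is the only term of the expansion whose polynomial part is the squarefree monomial $x_{\mu_1+1}\cdots x_{\mu_s+1}$, since the constraints $j_\ell\le\mu_\ell+1$ together with $\mu_1>\dots>\mu_s$ force the choices $j_1=\mu_1+1$, then $j_2=\mu_2+1$, and so on.

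The crucial point is then that the basis vector $T^*$ cannot occur in $\partial\eta$ for any $\eta\in S_{s-2}\otimes\bigwedge^{|\mu|+1}S_2$. Indeed, for $T^*$ to appear in $\partial(x^{\alpha'}\otimes q_1\wedge\dots\wedge q_{|\mu|+1})$ there would have to be an index $i$ with $x^{\alpha'}q_i=x_{\mu_1+1}\cdots x_{\mu_s+1}$ and with $\{q_1,\dots,q_{|\mu|+1}\}\setminus\{q_i\}$ equal to the set of exterior factors of $T^*$; hence $q_i$ would be a quadratic monomial dividing the squarefree monomial $x_{\mu_1+1}\cdots x_{\mu_s+1}$ that does not occur among those exterior factors. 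But every quadratic monomial dividing $x_{\mu_1+1}\cdots x_{\mu_s+1}$ has the form $x_{\mu_a+1}x_{\mu_b+1}$ with $a<b$, and this equals $x_kx_{\mu_\ell+1}$ with $\ell=a$ and $k=\mu_b+1$, which is a legitimate exterior factor of $T^*$ because $\mu_b+1\le\mu_a$ ($\mu$ being strictly decreasing). So no such $q_i$ exists, the coefficient of $T^*$ in $\partial\eta$ vanishes for every $\eta$, while it is $\pm1$ in $P$. Therefore $P$ is not a boundary, and $[P]\ne 0$ in $H_*(\mm^2)$.

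The one delicate step is choosing the right ``marker'' term $T^*$: what is needed is a monomial whose exterior-algebra part already contains every quadratic divisor of its polynomial part, which is exactly the property that makes it invisible to $\partial$. The more obvious candidate, the $x_1^{\,s}$-component of $P$, does not have this property (its polynomial part $x_1^{s}$ has the divisor $x_1^2$, which is not among its exterior factors), so it furnishes no such obstruction.
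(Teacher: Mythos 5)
Your proof is correct and takes essentially the same approach as the paper: both arguments isolate the unique ``square-free'' term of the product (the one whose exterior part already contains every quadratic divisor of its square-free polynomial part) and observe that such a basis vector can neither cancel nor appear in any boundary. The only difference is that the paper first reduces to the single full product $Z_0Z_1\cdots Z_{n-1}$ via the algebra structure of $H_*(\mm^2)$, whereas you run the same marker-term argument directly on each squarefree product, using the strict decrease of $\mu_1>\cdots>\mu_s$ to pin down the marker term uniquely.
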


\begin{Lemma}
\label{lem:products-are-isotypic}
	The product $Z_{\mu_1} \cdots Z_{\mu_s}$ in Theorem~\ref{thm:main}
	lies in the $\SF^\lambda(V)$-isotypic component of $H_*(\mm^2)$.
\end{Lemma}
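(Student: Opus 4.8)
The plan is to identify the $\GL(V)$-type of the element $Z_{\mu_1}\cdots Z_{\mu_s}$ by tracking the highest-weight vector through the Koszul cycle construction. First I would observe that each $Z_i$, as written, is built from the variables $x_1,\dots,x_i$ and is (up to a nonzero scalar, and choosing the right Borel) a highest-weight vector: it is annihilated by the standard raising operators $e_{k,k+1}\in\mathfrak{gl}(V)$ for $k<i$ because of the antisymmetrization structure — the expression $\sum_j(-1)^{j-1}x_j\otimes x_1x_i\wedge\cdots\widehat{x_jx_i}\cdots\wedge x_i^2$ is precisely the image of a Koszul-type differential applied to $x_1x_i\wedge\cdots\wedge x_ix_i$, and such ``partial derivative of a top wedge'' expressions in the first $i$ letters are the classical highest-weight vectors for the hook Schur functor $\SF^{(i+1,1^{i-1})}$. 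Thus $Z_i$ is a highest-weight vector of weight $(1,1,\dots,1,0,\dots,0)$ with $i$ ones, which is the weight of the self-conjugate hook $\left(\begin{smallmatrix}\mu_i\\\mu_i\end{smallmatrix}\right)$ when $i=\mu_i+1$; wait — more carefully, $Z_{\mu}$ involves variables $x_1,\dots,x_{\mu+1}$, has homological degree $\mu$ and internal degree $2\mu+1$, matching the hook $(\mu+1,1^{\mu})$ of size $2\mu+1$, which is exactly the $\mu$-th diagonal hook of $\lambda$.

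The key step is then the multiplicative claim: the product in $H_*(\mm^2)$ of highest-weight vectors for the hooks $(\mu_1+1,1^{\mu_1}),\dots,(\mu_s+1,1^{\mu_s})$ lands in the $\SF^\lambda$-component, where $\lambda=\left(\begin{smallmatrix}\mu\\\mu\end{smallmatrix}\right)$. Here I would exploit the fact that the $Z_i$ are ``supported on nested initial segments of variables'': $Z_{\mu_k}$ uses only $x_1,\dots,x_{\mu_k+1}$. Renaming variables so that the $s$ cycles use disjoint blocks of variables would give a product that is visibly a highest-weight vector of weight $(2\mu_1+1,2\mu_2+1,\dots)$ decomposed across blocks — but that is not quite $\lambda$. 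Instead, the correct bookkeeping is: the product $Z_{\mu_1}\cdots Z_{\mu_s}$, after antisymmetrizing the wedge factors, is a highest-weight vector whose weight is the sum of the hook weights read in the standard overlapping way, i.e. weight $\sum_k(1^{\mu_k+1})$. One checks this sum equals the weight $\lambda^{T}=\lambda$ (using self-conjugacy) by the Euler-type bijection already recalled in the text: stacking the diagonal hooks $(\mu_k+1,1^{\mu_k})$ reconstructs exactly the Ferrers diagram of $\lambda$. Being a nonzero highest-weight vector (nonzero by Lemma~\ref{lem:products-are-nonzero}) of weight $\lambda$ in a multiplicity-one situation, it must lie in $\SF^\lambda(V)$; combined with the bidegree count — homological degree $\sum\mu_k=|\mu|$ and internal degree $\sum(2\mu_k+1)=2|\mu|+s=|\lambda|$, matching Theorem~\ref{thm:second-Veronese-Koszul-description} — this pins down the component exactly.

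The main obstacle I anticipate is verifying that the \emph{product} of the $Z_i$ in the Koszul \emph{homology} algebra is genuinely a highest-weight vector and has the asserted weight, rather than just lying in a sum of Schur functors of that total weight that are $\leq\lambda$ in dominance order. The subtlety is that the DGA product on $K(\mm^2)$ mixes the tensor-wedge factors, and a priori the raising operators $e_{k,k+1}$ need not annihilate the product even when they annihilate each factor, because the factors share variables. I would handle this by arguing at the level of the polynomial functor: since $H_*(\mm^2)$ has a multiplicity-free decomposition into self-conjugate $\SF^\lambda$ (Theorem~\ref{thm:second-Veronese-Koszul-description}), and the bidegree $H_{|\mu|}(\mm^2)_{|\lambda|}$ contains \emph{only} the single summand $\SF^\lambda$, any nonzero element of that precise bidegree automatically lies in the $\SF^\lambda$-isotypic component. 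So in fact the weight/highest-weight analysis can be bypassed entirely: it suffices to check that $Z_{\mu_1}\cdots Z_{\mu_s}$ is nonzero (Lemma~\ref{lem:products-are-nonzero}) and has homological degree $|\mu|$ and internal degree $|\lambda|$, both of which are immediate from $\deg Z_i=(i,2i+1)$ and additivity of bidegree under the product. This reduces Lemma~\ref{lem:products-are-isotypic} to a one-line bidegree computation plus a citation of Lemma~\ref{lem:products-are-nonzero} and Theorem~\ref{thm:second-Veronese-Koszul-description}, so the real content is entirely in Lemma~\ref{lem:products-are-nonzero}.
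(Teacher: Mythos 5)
There is a genuine gap, and it sits exactly where you placed all the weight of the argument: the claim that ``the bidegree $H_{|\mu|}(\mm^2)_{|\lambda|}$ contains \emph{only} the single summand $\SF^\lambda$'' is false. Theorem~\ref{thm:second-Veronese-Koszul-description} says each $\SF^\lambda(V)$ occurs exactly once in all of $H_*(\mm^2)$, and that its unique occurrence is in bidegree $(|\mu|,|\lambda|)$; it does \emph{not} say that a given bidegree carries only one irreducible. Distinct self-conjugate partitions can share both invariants: for instance $\lambda=(4,2,1,1)=\left(\begin{smallmatrix}3&0\\3&0\end{smallmatrix}\right)$ and $\lambda'=(3,3,2)=\left(\begin{smallmatrix}2&1\\2&1\end{smallmatrix}\right)$ both have $|\lambda|=|\lambda'|=8$ and $|\mu|=|\mu'|=3$, so for $n\geq 4$ the component $H_3(\mm^2)_8$ contains both $\SF^{(4,2,1,1)}(V)$ and $\SF^{(3,3,2)}(V)$. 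Consequently, knowing that $Z_{\mu_1}\cdots Z_{\mu_s}$ is nonzero and sits in bidegree $(|\mu|,|\lambda|)$ does not pin down its isotypic component, and the lemma does not reduce to Lemma~\ref{lem:products-are-nonzero}. This is precisely why the paper's proof needs real input: it argues by induction on the number of hooks, peeling off $Z_{\mu_1}$, using the fact that $Z_{\mu_2}\cdots Z_{\mu_s}$ lives in $H_*(U)$ for the subspace $U=\Span(x_1,\ldots,x_{\mu_2+1})$ together with functoriality of isotypic components (Proposition~\ref{prop:functoriality-of-isotypic-components}), and then a Littlewood--Richardson computation (Lemma~\ref{lem:LR}) showing that $\SF^{\nu}(V)\otimes\SF^{\hat\lambda}(V)$ contains no self-conjugate constituent other than $\SF^\lambda(V)$.

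Your first two paragraphs contain the germ of a legitimate \emph{alternative} argument, but you abandon it before making it work. The product $Z_{\mu_1}\cdots Z_{\mu_s}$ is a weight vector for the torus, and its weight (e.g.\ for $\mu=(3,1)$ one gets $(2,3,1,4)$) rearranges to $\lambda$ itself; a weight vector of weight $w$ can only meet isotypic components $I_\rho$ with $\rho$ dominating the decreasing rearrangement of $w$, and since dominance order is reversed by conjugation, two distinct self-conjugate partitions of the same size are never comparable --- so, given that only self-conjugate types occur in $H_*(\mm^2)$, such a weight vector must lie in $I_\lambda$. That would be a genuinely different (and arguably shorter) proof, but it requires (a) actually computing the weight of the product and verifying its sorted form is $\lambda$, and (b) the incomparability observation; you gesture at (a) vaguely via the Euler bijection and never state (b), instead declaring the weight analysis ``bypassed entirely'' in favor of the incorrect bidegree shortcut. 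As written, the proposal does not establish the lemma.
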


\begin{proof}[Proof of Lemma~\ref{lem:products-are-nonzero}]
Since every squarefree product is a factor of the full product $Z_0Z_1Z_2\dotsm Z_{n-1}$,
it suffices to show that the latter is nonzero in  $H_*(\mm^2)$. First note that for each $i$,
	\[
	Z_{i} = (-1)^{i} x_{i+1} \otimes x_1x_{i+1} \wedge x_2x_{i+1}  \wedge \dotsc \wedge x_{i} x_{i+1} + \text{[terms with $x_{i+1}^2$]}
	\]
Hence each $Z_i$ has exactly one term which contains no square.
	Therefore $Z_0 Z_1 \dotsm Z_{n-1}$ contains the term
	\[ T := \pm x_1 x_2 \dotsm x_n \otimes \bigwedge_{i=1}^n \left(  \bigwedge_{j=1}^{i-1} x_j x_i \right) \]
	 and all other terms contain a square.
	Thus $T$ cannot cancel against any other term, and so $Z_0 Z_1 \dotsm Z_{n-1}$ is non-zero as an element of the Koszul complex.  To see that $Z_0Z_1Z_2\dotsm Z_{n-1}$ is also non-zero in homology, we claim that $T$ does not occur in the boundary of any element of the Koszul complex. Indeed, a basis element whose boundary involves $T$ would need to be a scalar multiple of 
	\[ \frac{x_1 x_2 \dotsm x_n}{x_{j_1}x_{j_2}} \otimes x_{j_1}x_{j_2} \wedge \bigwedge_{i=1}^n \left( \bigwedge_{j=1}^{i} x_j x_i \right)\]
with $1\leq j_1 < j_2\leq n$, but  would vanish since
	 $\bigwedge_{i=1}^n \bigwedge_{j=1}^{i} x_j x_i$ already contains every such product 
	 $x_{j_1}x_{j_2}$.
\end{proof}

We will need to consider the Koszul complex, and its homology, for {\it varying} spaces $V$. Let 
$K_\bullet(V)$ denote the Koszul complex of the second Veronese subalgebra of the symmetric algebra on $V$, i.e.
\[ 
K_\bullet(V) = K(\mm^2, \Sym(V)), \quad \text{ where } \mm:=\bigoplus_{d \geq 1}\Sym^d(V). 
\]
This is a {\it polynomial functor} of $V$ in the sense of \cite[Ch. I, App. A]{Macdonald}.
Further, we define $H_*(V) := H_*(K_\bullet(V))$, which is again a polynomial functor of $V$,
because sub- and quotient-functors of polynomial functors are again polynomial.
For a $\GL(V)$-representation $W$, we write $I_\lambda W$ for the $\SF^\lambda(V)$-isotypic component of $W$.  We will need a well-known fact about the interaction of polynomial functors with isotypic components.

\begin{Proposition}
\label{prop:functoriality-of-isotypic-components}
For any polynomial functor $V \longmapsto F(V)$, and any linear map $U \overset{f}{\longrightarrow} V$,
the induced map $F(U) \overset{F(f)}{\longrightarrow} F(V)$ preserves isotypic components, that is,
$
F(f)(I_\lambda F(U)) \subset I_\lambda F(V).
$
\end{Proposition}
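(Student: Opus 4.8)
The statement to prove is Proposition~\ref{prop:functoriality-of-isotypic-components}: for a polynomial functor $F$ and a linear map $f\colon U \to V$, the induced map $F(f)$ sends $I_\lambda F(U)$ into $I_\lambda F(V)$.

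The plan is to reduce to the case where $f$ is injective, and then to exploit naturality of $F$ together with $\GL$-equivariance. First I would observe that any linear map $f\colon U \to V$ factors as $U \twoheadrightarrow \operatorname{im}(f) \hookrightarrow V$, so it suffices to treat surjections and injections separately; alternatively, and more cleanly, one can factor $f$ through $U \hookrightarrow U\oplus V \twoheadrightarrow V$ where the first map is $u\mapsto (u, f(u))$ and the second is the projection, so it suffices to handle a split injection and a split surjection. The split surjection case is dual to (or can be deduced from) the split injection case by noting that a split surjection $V \oplus W \to V$ admits a section, so it is enough to treat the inclusion of a direct summand $\iota\colon U \hookrightarrow U \oplus W$.

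Next, for the inclusion of a direct summand $\iota\colon U \hookrightarrow U\oplus W =: V$, I would use the following averaging/equivariance argument. The key point is that $F(\iota)$ is equivariant for the inclusion $\GL(U) \hookrightarrow \GL(V)$ obtained by acting trivially on $W$; more is true, though: $F(\iota)$ is equivariant for the parabolic subgroup $P \leq \GL(V)$ of block-upper-triangular matrices preserving the flag $U \subset V$, with $F(\iota)$ intertwining the $\GL(U)$-action on $F(U)$ (pulled back along $P \twoheadrightarrow \GL(U)$) with the restriction of the $\GL(V)$-action. Consequently $F(\iota)(F(U))$ is a $\GL(U)$-stable subspace of $F(V)$ on which $\GL(U)$ acts through $F(U)$. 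Since taking $\SF^\lambda(U)$-isotypic components is itself functorial and compatible with the $\GL(U)$-equivariant embedding $F(\iota)$, we get $F(\iota)\bigl(I_\lambda F(U)\bigr) \subseteq F(V)^{\GL(U)\text{-isotype }\lambda}$. Finally, the comparison of $\GL(U)$-isotypic type $\lambda$ inside $F(V)$ with the $\GL(V)$-isotypic component $I_\lambda F(V)$ is exactly classical branching: the Schur functor $\SF^\lambda(V)$ restricted to $\GL(U)$ contains $\SF^\lambda(U)$ with multiplicity one (the Littlewood--Richardson / Pieri coefficient $c^\lambda_{\lambda,\emptyset}$ on the $U$-part with the $W$-part in degree $0$), whereas no $\SF^\mu(V)$ with $\mu \neq \lambda$ of the same weight restricts to contain $\SF^\lambda(U)$ in the top internal degree; more simply, one can use highest-weight vectors. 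A vector $v \in F(V)$ lies in $I_\lambda F(V)$ iff it is a sum of $\GL(V)$-translates of highest-weight vectors of weight $\lambda$, and a highest-weight vector for $\GL(U)$ of weight $\lambda$ that happens to use only the ``$U$-directions'' is automatically a highest-weight vector for $\GL(V)$ of the same weight $\lambda$ (the raising operators for the extra $W$-directions annihilate it because $F$ is polynomial and the vector is supported in $U$). Tracking weights through $F(\iota)$ then yields the claim.

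I expect the main obstacle to be pinning down the last comparison rigorously without hand-waving: one must argue that the image $F(\iota)(F(U))$, viewed inside $F(V)$, is not only $\GL(U)$-stable of the right isotype but that its $\SF^\lambda(U)$-isotypic piece lands in the $\SF^\lambda(V)$-isotypic piece of $F(V)$ rather than spreading across several $\GL(V)$-irreducibles whose $\GL(U)$-restriction happens to contain $\SF^\lambda(U)$. The cleanest route is the highest-weight-vector argument just sketched, using that the composite $U \hookrightarrow V$ followed by projection back to $U$ is the identity, so that $F$ applied to this composite is the identity on $F(U)$; combining this with the fact that $F(\text{projection})$ is $\GL(U)$-equivariant and kills nothing of $F(\iota)(F(U))$ forces the decomposition to be compatible. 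Everything else — the factorization of $f$, the reduction to a direct-summand inclusion, and the functoriality of $I_\lambda(-)$ for equivariant maps — is routine linear/representation-theoretic bookkeeping, valid over the characteristic-zero field $\kk$ where Schur--Weyl duality and complete reducibility are available.
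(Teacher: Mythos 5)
Your overall strategy is genuinely different from the paper's. The paper disposes of this in one line: a polynomial functor canonically decomposes as a direct sum of subfunctors $F = \bigoplus_\lambda F_\lambda$ with $F_\lambda(V) = I_\lambda F(V)$ (see \cite[Ch. I, App. A, (5.5)]{Macdonald}), and since $F(f)$ restricts to each summand $F_\lambda$, the statement is immediate for an arbitrary linear map $f$, with no case analysis. Your route --- factor $f$ through a split injection and a split surjection, then argue with highest-weight vectors --- can be made to work, and your treatment of the split injection $\iota\colon U \hookrightarrow U \oplus W$ is essentially sound: the image of $F(\iota)$ is fixed by $\mathrm{id}_U \oplus h$ for all $h \in \GL(W)$, so its weights vanish in the $W$-directions; a $\GL(U)$-highest-weight vector of weight $\lambda$ therefore maps to a $\GL(U\oplus W)$-highest-weight vector of weight $\lambda$ (or to zero); and since $I_\lambda F(U)$ is spanned by $\GL(U)$-translates of such vectors and $I_\lambda F(U \oplus W)$ is stable under the full general linear group, the desired containment follows for $\iota$.

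The gap is in the split surjection case. You assert it is ``dual to (or can be deduced from)'' the injection case because the surjection $\pi\colon V \oplus W \to V$ admits a section $s$. That is a non sequitur: the identity $F(\pi)\circ F(s) = \mathrm{id}$ only says that $F(s)F(V)$ is a complement to $\ker F(\pi)$ inside $F(V \oplus W)$; it does not imply that this splitting is compatible with the isotypic decomposition of $F(V\oplus W)$, which is what you would need in order to transfer the conclusion from $F(s)$ to $F(\pi)$. The surjection case needs its own (short, standard) argument: since $\pi \circ (\mathrm{id}_V \oplus h) = \pi$ for all $h \in \GL(W)$, the map $F(\pi)$ is constant on $\GL(W)$-orbits and hence factors through the $\GL(W)$-invariants of $F(V \oplus W)$; by the branching rule $\SF^\rho(V\oplus W) \cong \bigoplus_{\mu,\nu} c^\rho_{\mu\nu}\,\SF^\mu(V)\otimes\SF^\nu(W)$, the $\GL(W)$-invariant part of $I_\rho F(V\oplus W)$ is its $\SF^\rho(V)\otimes\SF^{\emptyset}(W)$ component (because $c^\rho_{\mu,\emptyset} = \delta_{\mu\rho}$), which is $\SF^\rho(V)$-isotypic over $\GL(V)$, so $F(\pi)$ preserves isotypic components as well. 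With that step supplied your argument closes up, but you should be aware that the canonical decomposition of polynomial functors renders the whole case analysis unnecessary.
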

\begin{proof}
One always has a direct sum decomposition of polynomial functors
$F=\oplus_\lambda F_\lambda$ with the property that $F_\lambda(V)=I_\lambda(F(V))$; see
\cite[Ch. I, App. A, (5.5)]{Macdonald}. The proposition then follows.
\end{proof}

A key step in proving Lemma~\ref{lem:products-are-isotypic} is a certain combinatorial lemma.
For any self-conjugate partition
$\lambda=\left( \begin{smallmatrix} \mu \\ \mu \end{smallmatrix} \right)
=\left( \begin{smallmatrix} \mu_1 & \cdots & \mu_s \\  \mu_1 & \cdots & \mu_s \end{smallmatrix} \right),
$
define two smaller self-conjugate partitions
$
\nu=\left( \begin{smallmatrix} \mu_1 \\ \mu_1\end{smallmatrix} \right)
$
and
$
\hat{\lambda}:=\left( \begin{smallmatrix} \mu_2 & \cdots & \mu_s \\  \mu_2 & \cdots & \mu_s \end{smallmatrix} \right).
$
E.g., in Example~\ref{ex:Euler-bijection}, one has 
$$
\lambda=\left( \begin{smallmatrix} 
6 &5 &3  \\ 
6 &5 &3
\end{smallmatrix} \right)
=\ktableau{ 
{\times} & {} & {} & {} & {} & {} & {} \\
{} & {\times} & {} & {} & {} & {} & {} \\
{} & {} & {\times} & {} & {} & {} \\
{} & {} & {} \\
{} & {} & {} \\
{} & {} & {} \\
{} & {} }
\qquad
\nu=\left( \begin{smallmatrix} 
6  \\ 
6
\end{smallmatrix} \right) 
=\ktableau{ 
{\times} & {} & {} & {} & {} & {} & {} \\
{} & \\
{} & \\
{} & \\
{} & \\
{} & \\
{} & }
\quad
\hat{\lambda}=\left( \begin{smallmatrix} 
5 &3  \\ 
5 &3
\end{smallmatrix} \right)
=\ktableau{ 
 {\times} & {} & {} & {} & {} & {} \\
{} & {\times} & {} & {} & {} \\
{} & {} \\
{} & {} \\
{} & {}\\
{} } 
$$

\begin{Lemma}
\label{lem:LR}
With the above notation,  if $\dim_\kk V=\mu_1+1$, then the tensor product 
$\SF^{\nu}(V) \otimes \SF^{\hat{\lambda}}(V)$ contains
the irreducible $\SF^\lambda(V)$, but contains no other irreducibles $\SF^{\rho}(V)$ for
self-conjugate partitions $\rho \neq \lambda$.
\end{Lemma}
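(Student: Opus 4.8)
The plan is to trade the hook $\nu$ for a symmetric power twisted by a determinant, apply Pieri's rule, and then read off which constituents are self-conjugate by a short combinatorial argument. Write $N := \dim_\kk V = \mu_1+1$. In Frobenius notation $\nu = \left(\begin{smallmatrix}\mu_1\\\mu_1\end{smallmatrix}\right) = (\mu_1+1,1^{\mu_1})$, which has exactly $N$ parts; subtracting the column $(1^N)$ leaves the single row $(\mu_1,0,\ldots,0)$, so $\SF^\nu(V) \cong \Sym^{\mu_1}(V)\otimes\det(V)$ as $\GL(V)$-representations. Hence
\[
\SF^{\nu}(V)\otimes\SF^{\hat\lambda}(V)\;\cong\;\det(V)\otimes\bigl(\Sym^{\mu_1}(V)\otimes\SF^{\hat\lambda}(V)\bigr),
\]
and, since $\kk$ has characteristic zero, Pieri's rule expands the inner factor as $\bigoplus_\kappa \SF^\kappa(V)$, one summand for each partition $\kappa$ such that $\kappa/\hat\lambda$ is a horizontal strip of size $\mu_1$. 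As $\ell(\hat\lambda)\le N-1$ (it is obtained from $\lambda$ by deleting its outer diagonal hook), adjoining a horizontal strip keeps the number of parts at most $N$, so no summand is killed by $\dim_\kk V=N$; twisting by $\det(V)$ then replaces each $\SF^\kappa(V)$ by $\SF^{\kappa+(1^N)}(V)$, and these are pairwise distinct irreducibles, each of multiplicity one. It therefore suffices to show that exactly one $\kappa$ in this Pieri sum has $\kappa+(1^N)$ self-conjugate, namely the one with $\kappa+(1^N)=\lambda$.

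For the combinatorial core, set $\kappa^{\star}:=(\mu_1,\hat\lambda_1,\hat\lambda_2,\ldots)$. One checks directly that $\kappa^{\star}$ is a partition (because $\mu_1\ge\mu_2+1=\hat\lambda_1$), that $\kappa^{\star}/\hat\lambda$ is a horizontal strip (since $\kappa^{\star}_{i+1}=\hat\lambda_i$), that it has $|\kappa^{\star}|-|\hat\lambda|=\mu_1$ cells, and — using $\lambda_1=\mu_1+1$ together with the fact that deleting the first row and column of $\lambda$ yields $\hat\lambda$ — that $\kappa^{\star}+(1^N)=\lambda$. Conversely, suppose $\rho:=\kappa+(1^N)$ is self-conjugate for some $\kappa$ in the Pieri sum. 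Since $\ell(\rho)=N$ we get $\rho_1=\rho'_1=N$, hence $\kappa_1=\mu_1$, so the outer diagonal hook of $\rho$ is precisely $\nu=\left(\begin{smallmatrix}\mu_1\\\mu_1\end{smallmatrix}\right)$; removing it leaves the self-conjugate partition $\kappa^{-}:=(\kappa_2,\kappa_3,\ldots)$. The horizontal-strip interlacing $\hat\lambda_1\ge\kappa_2\ge\hat\lambda_2\ge\kappa_3\ge\cdots$ forces $\kappa^{-}\subseteq\hat\lambda$, while $|\kappa^{-}|=|\kappa|-\mu_1=|\hat\lambda|$; hence $\kappa^{-}=\hat\lambda$ and therefore $\kappa=\kappa^{\star}$. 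Combining the two directions, $\SF^\lambda(V)$ occurs in $\SF^\nu(V)\otimes\SF^{\hat\lambda}(V)$ (with multiplicity one) and no other self-conjugate $\SF^\rho(V)$ does.

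The one genuinely pointed step is the converse above: making self-conjugacy of $\rho=\kappa+(1^N)$ rigid enough to determine $\kappa$. The trick is to strip off the outer diagonal hook of $\rho$ — which self-conjugacy and $\ell(\rho)=N$ force to equal $\nu$ — and then to note that the Pieri interlacing inequalities sandwich the remaining partition $\kappa^{-}$ inside $\hat\lambda$ while a cell count rules out a proper inclusion. Everything upstream of that is routine once one observes that the hypothesis $\dim_\kk V=\mu_1+1$ is exactly the length of $\nu$, so that $\SF^\nu(V)$ collapses to $\Sym^{\mu_1}(V)\otimes\det(V)$ and honest Pieri replaces a full Littlewood--Richardson computation with its length-truncation caveats.
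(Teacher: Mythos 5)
Your proof is correct, and it takes a genuinely different route from the paper's. The paper invokes the full Littlewood--Richardson rule for $\SF^{\nu}(V)\otimes\SF^{\hat{\lambda}}(V)$ and then argues directly on the LR tableaux: the constraint $\ell(\rho)\leq\mu_1+1$ forces the vertical strip $1,2,\ldots,\mu_1+1$ into the top $\mu_1+1$ rows, and self-conjugacy (giving $\rho_1\leq\mu_1+1$) forces the horizontal strip of ones into the first $\mu_1+1$ columns, pinning $\rho=\lambda$. You instead exploit the fact that the hypothesis $\dim_\kk V=\mu_1+1$ is exactly $\ell(\nu)$, so that $\SF^{\nu}(V)\cong\Sym^{\mu_1}(V)\otimes\det(V)$, reducing everything to Pieri's rule followed by a determinant twist. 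Your route buys two things: it sidesteps the length-truncation caveat inherent in applying the LR rule over $\GL_N$ (every Pieri summand survives since $\ell(\hat{\lambda})\leq N-1$), and it yields the complete multiplicity-free decomposition of the tensor product, not just the statement about self-conjugate constituents. The paper's route is shorter to state given that it has already set up the LR rule, and generalizes more readily if one were to vary $\dim_\kk V$. Your uniqueness step is sound as written -- the interlacing inequalities give $\kappa^{-}\subseteq\hat{\lambda}$ and the cell count forces equality -- though note that the self-conjugacy of $\kappa^{-}$, while true, is never actually used there; only $\rho_1=\rho'_1=N$ (hence $\kappa_1=\mu_1$) is needed from self-conjugacy of $\rho$.
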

\begin{proof}
The {\it Littlewood-Richardson rule} \cite[\S I.9]{Macdonald} asserts that
$$
\SF^{\hat{\lambda}}(V) \otimes \SF^{\nu}(V)
= \bigoplus_\rho c_{\nu, \hat{\lambda}}^\rho \SF^\rho(V)
$$
where the sum is over partitions $\rho$ whose Ferrers diagram contains $\hat{\lambda}$,
with $c_{ \nu, \hat{\lambda}}^\rho$ counting these objects:
\begin{itemize}
\item {\it column-strict tableaux} of the {\it skew shape}
$\rho/\hat{\lambda}$, 
\item
filled with $\mu_1+1$ ones and exactly one occurrence of each of $2,3,\ldots, \mu_1+1$,
\item
containing a subsequence $1,2,3,\ldots,\mu_1,\mu_1+1$ read in a weakly southwesterly
fashion. 
\end{itemize} 
This means that this subsequence of $\mu_1+1$ letters will occupy a vertical strip (no two in the same row),
while the $\mu_1+1$ ones will occupy a horizontal strip (no two  in the same column).

  However, because $\dim_\kk V = \mu_1+1$, the summand $\SF^\rho(V)$ vanishes unless $\ell(\rho) \leq \mu_1+1$.  This forces the aforementioned vertical strip to occupy the top $\mu_1+1$ rows of $\rho$.
Since we are only concerned with {\it self-conjugate} $\rho$ having $\SF^\rho(V)$ in the expansion, 
one may also assume that $\rho_1 \leq \mu_1+1$, forcing the aforementioned horizontal strip to occupy the first $\mu_1+1$ columns of $\rho$.  Together, these two conditions force $\rho=\lambda$.
\end{proof}

\begin{proof}[Proof of Lemma~\ref{lem:products-are-isotypic}]	
We will show that the product $Z_{\mu_1} \dotsm Z_{\mu_{s-1}} Z_{\mu_s}$ lies in  $I_\lambda H_*(V)$ by induction on $s$.

A key fact to note, both for the base case $s=1$, and for the inductive step, is that
Theorem~\ref{thm:second-Veronese-Koszul-description} implies 
$H_{\mu_1}(V)_{2\mu_1+1} \cong \SF^{\nu}(V)$ where 
$\nu=\left( \begin{smallmatrix} \mu_1 \\ \mu_1\end{smallmatrix} \right).$
Since $Z_{\mu_1}$ lies in $H_{\mu_1}(V)_{2\mu_1+1}$, this take care of the base case.

In the inductive step, note that each of $Z_{\mu_2}, \ldots, Z_{\mu_{s-1}}, Z_{\mu_s}$
involves only variables $x_1,x_2,\ldots,x_{\mu_2+1}$, and therefore their
product $Z_{\mu_2}  \cdots Z_{\mu_{s-1}} Z_{\mu_s}$ may be considered 
an element of $H_*(U)$ where $U$ is the subspace of $V$ with 
$\kk$-basis $x_1,x_2,\ldots,x_{\mu_2+1}$.  By induction, this product 
lies in $I_{\hat{\lambda}} H_*(U)$.  Proposition~\ref{prop:functoriality-of-isotypic-components} applied to the 
inclusion $U \hookrightarrow V$ show that this same product, considered as an element of $H_*(V)$,
lies in $I_{\hat{\lambda}} H_*(V)$.  

As observed above, $Z_{\mu_1}$ lies in $I_\nu H_*(V)$,
so the product $Z_{\mu_1} \cdot \left( Z_{\mu_2} \dotsm Z_{\mu_{s-1}} Z_{\mu_s} \right)$
lies in the image of the multiplication map  
$
I_\nu H_*(V) \otimes I_{\hat{\lambda}} H_*(V) \rightarrow H_*(V).
$
Since Theorem~\ref{thm:second-Veronese-Koszul-description} implies 
$H_*(V)$ only contains irreducibles $\SF^\rho(V)$ with self-conjugate $\rho$,
Proposition~\ref{lem:LR} shows the image of this multiplication lies in $I_\lambda H_*(V)$.
\end{proof}

This completes the proof of Theorem~\ref{thm:main}.

\section{A minimal set of generators}
In this section, we prove Theorem \ref{thm:mingen}.
We will use the following technical lemma:

\begin{Lemma}[Garnir-type relations]\label{lem:garnir}
	Let $t \in \NN$. Let $\ab = (a_1, \dotsc, a_{t+2})$ and $\bb = (b_2, \dotsc, b_t)$ be sequences of variables, that is $a_i, b_j \in \{x_1,\ldots,x_n\}$, with repetitions allowed.
	For $j = 1,\dotsc, t+2$ set
	\begin{align*}
	\ab^{(j)} &:= (a_1, \dotsc, \widehat{a}_j, \dotsc, a_{t+2}), \,\, \text{and} \\
	\bb^{(j)} &:= (a_j, b_2, \dotsc, b_t). &
	\end{align*}
	Then 
	\[ \sum_{j = 1}^{t+2} (-1)^j z_{\ab^{(j)}, \bb^{(j)}} = 0 \]
\end{Lemma}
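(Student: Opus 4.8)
The plan is to exhibit $\sum_{j=1}^{t+2}(-1)^j z_{\ab^{(j)},\bb^{(j)}}$, up to an overall sign, as a single fully antisymmetrized expression in which the first wedge factor is \emph{symmetric} in two of the variables, so that the whole thing is forced to vanish. Concretely, I would introduce
\[
W := \sum_{\tau \in \Symm_{t+2}} \sgn(\tau)\; a_{\tau(t+2)} \otimes \bigl( a_{\tau(1)}a_{\tau(2)} \wedge b_2 a_{\tau(3)} \wedge b_3 a_{\tau(4)} \wedge \cdots \wedge b_t a_{\tau(t+1)} \bigr) \in S_1 \otimes \bigwedge^t S_2
\]
(the $t$ wedge factors absorb the indices $\tau(1),\tau(2)$ and $\tau(3),\dots,\tau(t+1)$, and the tensor factor absorbs $\tau(t+2)$, so each of $1,\dots,t+2$ occurs exactly once), and then prove two statements: (a) $W = 0$, and (b) $W = \sum_{j=1}^{t+2}(-1)^{j-1} z_{\ab^{(j)},\bb^{(j)}}$. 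Since $(-1)^{j-1} = -(-1)^j$, combining (a) and (b) gives the lemma.

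For (a), I would use the fixed-point-free involution $\tau \mapsto \tau\circ(1\,2)$ on $\Symm_{t+2}$: it negates $\sgn(\tau)$ but fixes the summand attached to $\tau$, because the only affected factor $a_{\tau(1)}a_{\tau(2)}$ equals $a_{\tau(2)}a_{\tau(1)}$ in the commutative ring $S$, while all remaining factors involve only $\tau(3),\dots,\tau(t+2)$. Hence the terms of $W$ cancel in pairs and $W = 0$.

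For (b), I would partition $\Symm_{t+2}$ according to the value $j := \tau(1)$. For fixed $j$, writing $\phi_j\colon\{1,\dots,t+1\}\to\{1,\dots,t+2\}\setminus\{j\}$ for the increasing bijection (so $(\ab^{(j)})_m = a_{\phi_j(m)}$), a permutation $\tau$ with $\tau(1)=j$ corresponds bijectively to $\sigma\in\Symm_{t+1}$ via $\tau(m+1)=\phi_j(\sigma(m))$, and an inversion count gives $\sgn(\tau)=(-1)^{j-1}\sgn(\sigma)$: the leading entry $j$ forms an inversion with each of the $j-1$ smaller entries, and $\phi_j$ being order-preserving contributes nothing further. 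Substituting, and using $(\bb^{(j)})_1=a_j=a_{\tau(1)}$ together with $(\bb^{(j)})_k=b_k$ for $2\le k\le t$, the block of $W$ with $\tau(1)=j$ collapses to
\[
(-1)^{j-1}\sum_{\sigma\in\Symm_{t+1}} \sgn(\sigma)\,(\ab^{(j)})_{\sigma(t+1)} \otimes \bigl( (\bb^{(j)})_1 (\ab^{(j)})_{\sigma(1)} \wedge \cdots \wedge (\bb^{(j)})_t (\ab^{(j)})_{\sigma(t)} \bigr) = (-1)^{j-1} z_{\ab^{(j)},\bb^{(j)}},
\]
and summing over $j=1,\dots,t+2$ yields (b).

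I do not expect a genuine obstacle: this is the standard mechanism behind Garnir relations, with the role of the straightening symmetry played by the commutativity $a_{\tau(1)}a_{\tau(2)}=a_{\tau(2)}a_{\tau(1)}$ — i.e.\ by the fact that the removed variable $a_j$ duplicates the first $b$-slot of $z_{\ab^{(j)},\bb^{(j)}}$. The only point demanding care is the sign normalization $\sgn(\tau)=(-1)^{j-1}\sgn(\sigma)$ in step (b), which must reproduce exactly the alternating signs $(-1)^j$ of the statement; I would sanity-check it on the case $t=1$ (three $a$'s, no $b$'s), where $W$ is a sum of six terms that visibly cancel in pairs.
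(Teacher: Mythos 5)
Your proof is correct and is essentially the paper's own argument in a slightly repackaged form: the paper expands $\sum_j(-1)^j z_{\ab^{(j)},\bb^{(j)}}$ as a sum over pairs (injection $\tau\colon[t+1]\to[t+2]$, omitted index $j$) and cancels terms by the sign-reversing involution that swaps $j$ with $\tau(1)$, which is exactly your involution $\tau\mapsto\tau\circ(1\,2)$ on $\Symm_{t+2}$ after your reindexing, exploiting the same commutativity $a_j a_{\tau(1)}=a_{\tau(1)}a_j$ and the same sign bookkeeping $\sgn(\tau)=(-1)^{j-1}\sgn(\sigma)$. Your step (b) is just the paper's first displayed reindexing read in reverse, so the two proofs coincide.
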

\begin{proof}
	We compute:
	\begin{align*}
	\sum_{j = 1}^{t+2} (-1)^j z_{\ab^{(j)}, \bb^{(j)}} 
	&= \sum_{j = 1}^{t+2} (-1)^j \sum_{\sigma\in\Symm_{t+1}} \sgn(\sigma) \,\, a^{(j)}_{\sigma(t+1)} \otimes a_ja^{(j)}_{\sigma(1)} \wedge b_2a^{(j)}_{\sigma(2)} \wedge \dotsm \wedge b_{t}a^{(j)}_{\sigma(t)} \\
	&=\sum_{(\tau, j)} (-1)^{j} \sgn(\tau) \,\, a_{\tau(t+1)} \otimes a_ja_{\tau(1)} \wedge b_2a_{\tau(2)} \wedge \dotsm \wedge b_{t}a_{\tau(t)}
	\end{align*}
	where the second sum runs over all injective maps $\tau: [t+1] \to [t+2]$ and $j \in [t+2]$ is the element which is not in the image of $\tau$.  Here $\sgn(\tau)$ is $+1,-1$ depending upon whether the number of inversion pairs $1 \leq i < j \leq t+1$ with $\tau(i) > \tau(j)$ is even or odd.
	
	We now show that the above sum vanishes via a sign-reversing involution on its terms. For each $\tau$ as above, define a new injection $\tilde{\tau}:[t+1] \to [t+2]$ that first applies $\tau$ and then swaps the values $\tau(1)$ and $j$, that is,
$$
\tilde{\tau}(i) = 
\begin{cases}
\tau(i) & \text{ if } i \neq 1,\\
j& \text{ if }i=1.
\end{cases}
$$
	Now, the pairs $(\tau,j)$ and $(\tilde{\tau},\tau(1))$ give rise to the same term in the sum. One can check that the signs of the two terms, namely $(-1)^{j} \sgn(\tau)$ for the former, versus $( -1)^{\tau(1)}\sgn(\tilde{\tau})$ for the latter,
	are opposite.
\end{proof}

\begin{proof}[Proof of Theorem \ref{thm:mingen}]
	It follows from Theorem \ref{thm:main} that $H_*(\mm^2)$ is minimally generated by any basis of the $\GL(V)$-re\-pre\-sentation generated by the $Z_t$.  
	We know that the $\GL(V)$-representation generated by $Z_t$ is the hook-shaped irreducible $\SF^{\nu}$ where $\nu=(t+1,1^t)$.
	The dimension of this representation is the number of semi-standard Young tableaux of that shape, and as discussed immediately after the statement of Theorem \ref{thm:mingen},  there is an obvious bijection between our proposed basis and the set of those tableaux.  Thus it suffices to show that our proposed basis is a spanning set.

	Fix the value of $t$. 
	Note that $z_{\ab,\bb}$ is linear in the entries of $\ab$ and $\bb$, and hence for any $g \in \GL(V)$, the element $g.z_{\ab,\bb}$ can be written as a sum of elements of the form $z_{\ab',\bb'}$. 
	Moreover, we only need to consider the case where $\ab$ and $\bb$ are sequences of variables, again by linearity.
	Further, $z_{\ab,\bb}$ is antisymmetric in the entries of $\ab$ and symmetric in the entries of $\bb$, hence we may assume that $(\ab,\bb)$ has $\ab$ strictly increasing and $\bb$ nondecreasing, that is, satisfying 
conditions (i), (ii) of the theorem.  
	
To show that all such $z_{\ab,\bb}$ lie in the span of those that also satisfy condition (iii), we will induct on
the quantity $\omega(\ab,\bb):=\#\{j: a_1 > b_j\}$.  For the base case, note that 
$(\ab,\bb)$ satisfies (3) if and only if $\omega(\ab,\bb) = 0$.
	If $\omega(\ab,\bb) > 0$, then let $\bar{\ab} := (b_1, a_1,\dotsc, a_{t+1})$ and $\bar{\bb} := (b_2, \dotsc, b_t)$.
	We apply Lemma \ref{lem:garnir} to this $\bar{\ab}$ and $\bar{\bb}$. It is easy to see that for each $j > 1$ one has 
	$$\omega(\bar{\ab}^{(j)}, \bar{\bb}^{(j)}) < \omega(\bar{\ab}^{(1)}, \bar{\bb}^{(1)})) = \omega(\ab,\bb),$$ 
	and so we can rewrite $z_{\ab,\bb}$ as a linear combination of elements $z_{\bar{\ab}^{(j)}, \bar{\bb}^{(j)}}$
	 to which induction applies.
\end{proof}


\begin{thebibliography}{100}
\normalsize


\bibitem[1]{ACI1} L. L. Avramov, A. Conca, and S. B. Iyengar. \enquote{\href{http://dx.doi.org/10.4310/MRL.2010.v17.n2.a1}{Free resolutions over commutative Koszul algebras}}. In: \textit{Math. Res. Lett.} 17.2 (2010), pp. 197--210.

\bibitem[2]{ACI2} L. L. Avramov, A. Conca, and S. B. Iyengar. \enquote{\href{http://dx.doi.org/10.1007/s00208-014-1060-4}{Subadditivity of syzygies of Koszul algebras}}. In: \textit{Math. Ann. 361.1-2} (2015), pp. 511--534.

\bibitem[3]{BDGMS} A. Boocher, A. D'Al\`\i, E. Grifo, J. Monta\~no, and A. Sammartano. \enquote{\href{http://dx.doi.org/10.4418/2015.70.1.16}{Edge ideals and DG algebra resolutions}}. In: \textit{Matematiche (Catania)} 70.1 (2015), pp. 215--238.

\bibitem[4]{BH} W. Bruns and J. Herzog. \textit{Cohen-Macaulay rings}. Rev. Ed. Cambridge University Press, 1998.

\bibitem[5]{BCR2} W. Bruns, A. Conca, and T. R\"omer. \enquote{\href{http://dx.doi.org/10.1007/978-3-642-19492-4_2}{Koszul cycles}}. In: \textit{Combinatorial aspects of commutative algebra and algebraic geometry}. Vol. 6. Abel Symp. Springer, Berlin, 2011, pp. 17--33.

\bibitem[6]{BCR1} W. Bruns, A. Conca, and T. R\"omer. \enquote{\href{http://dx.doi.org/10.1007/s00208-010-0616-1}{Koszul homology and syzygies of Veronese subalgebras}}. In: \textit{Math. Ann.} 351.4 (2011), pp. 761--779.

\bibitem[7]{JPW} T. J\'ozefiak, P. Pragacz, and J. Weyman. \enquote{Resolutions of determinantal varieties and tensor complexes associated with symmetric and antisymmetric matrices}. In: \textit{Young tableaux and Schur functors in algebra and geometry (Toru\'n, 1980)}. Vol. 87. Ast\'erisque. Soc. Math. France, Paris, 1981, pp. 109--189.

\bibitem[8]{Macdonald} I. G. Macdonald. Symmetric functions and Hall polynomials. Second. Oxford Classic Texts in the Physical Sciences. With contribution by A. V. Zelevinsky and a foreword by Richard Stanley, Reprint of the 2008 paperback edition. The Clarendon Press, Oxford University Press, New York, 2015.

\bibitem[9]{ReinerRoberts} V. Reiner and J. Roberts. \enquote{\href{http://dx.doi.org/10.1023/A:1008728115910}{Minimal resolutions and the homology of matching and chessboard complexes}}. In: \textit{J. Algebraic Combin.}11.2 (2000), pp. 135--154.

\end{thebibliography}
\end{document}